\newtheorem{lemma}{Lemma}      
\newtheorem{proposition}[lemma]{Proposition}
\newtheorem{remark}{Remark}
\DeclareMathOperator{\length}{length}
\title {Statistical properties of intermittent maps with unbounded derivative}
\begin{document}
\author {Giampaolo \textsc{Cristadoro},~Nicolai \textsc{Haydn},~ Philippe \textsc{Marie},~ Sandro \textsc{Vaienti} }
\maketitle

\bibliographystyle{plain}

\abstract{ We study the ergodic and statistical properties of a class of maps of the circle and of the interval of Lorenz type which present indifferent fixed points and points with unbounded derivative. These maps have been previously investigated in the physics literature. We prove in particular that correlations decay 
polynomially, and that suitable Limit Theorems (convergence to Stable Laws or Central Limit Theorem) 
hold for H\"older continuous observables.  We moreover
show that the return and hitting times are in the limit exponentially distributed.}
$$$$
$$$$

\section{Introduction}
The prototype for intermittent maps of the interval is the well known Pomeau-Manneville map $T$ defined on the unit interval $[0,1]$ and which admits a neutral fixed point at $0$ with local behavior $T(x)=x+cx^{1+\alpha}$; otherwise it is uniformly expanding. The constant $\alpha$ belongs to $(0,1)$ to guarantee the existence of a finite absolutely continuous invariant probability measure  and the constant $c$ could be chosen in such a way that the map $T$ has a Markov structure. This map enjoy polynomial decay of correlations  and this property still persists even if  the map is not anymore Markov \cite{LSY}. 

Another interesting class of maps of the interval are the one-dimensional uniformly expanding Lorenz-like maps (see \cite{GW, W, GS} for their introduction and for the study of their topological properties),  whose features are now the presence of points with unbounded derivatives and the lack of Markov structure: in this case one could build up towers and find various rates for the decay of correlations depending on the tail of the return time function on the base of the tower, see, for instance \cite{KDO} and \cite{HLO}. The latter paper deals in particular with one-dimensional maps which admit critical points and, eventually, points with unbounded derivatives, but it leaves open the case where there is  presence of neutral fixed points.

In this paper we are interested  in maps which exhibit the last two  behaviors, namely neutral fixed points and points with unbounded derivatives. Such maps have been introduced into the physics literature by Grossmann and Horner in 1985 \cite{GH}; they showed numerically a polynomial decay of correlations and they also studied other statistical properties like the susceptibility and the $1/f$-noise. Another contribution by A. Pikovsky \cite{P} showed, still with heuristic arguments, that these maps produce anomalous diffusion with square displacement growing faster than linearly.  R. Artuso and G. Cristadoro \cite{AC} improved the latter result by computing the moments of the displacement  on the infinite replicas of the fundamental domain and showed a phase transition in the exponent of the moments growth.  Recently  Lorenz cusp maps arose to describe the distribution of the Casimir maximum in the Kolmogorov-Lorenz model of geofluid dynamics \cite{PM}. Despite this interesting physical phenomenology, we did not find any rigorous mathematical investigation of such maps. These maps are defined on the torus $\mathbb{T}=[-1,1]/\sim$ and depend on the parameter $\gamma$ (see below); when $\gamma=2$ the corresponding map was taken as an example of the non-summability of the first hyperbolic time by Alves and Araujo in \cite{AA}. This maps reads:
\begin{equation}\label{AA}
\tilde{T}(x)=\left\{
\begin{aligned}
& 2\sqrt{x}-1  &\hspace{0.6cm} \text{if} \ x\ge 0\\
&  1-2\sqrt{|x|} &\hspace{0.6cm} \text{otherwise}   
\end{aligned}
\right.
\end{equation}
 and it was  proved in \cite{AA} that it is topological mixing, but no other ergodic properties were studied.

Actually, the Grossmann and Horner maps are slightly different from those investigated in \cite{P} and \cite{AC}, the difference being substantially in the fact that the latter are defined on the circle instead than on the unit interval. We will study in detail the circle version of these maps in Sections 2 to 5, and we will show in Section 6 how to generalize our results to the interval version: since both classes of maps are Markov, the most important information, especially in computing distortion, will come from the local behavior around the neutral fixed points and the points with unbounded derivatives and these behaviors will be the same for both versions. There is nevertheless an interesting difference. The circle version introduced in Section 1 is written in such a way that the Lebesgue measure is automatically invariant. This is not the case in general for the interval version quoted in Section 6. However the strategy that we adopt to prove statistical properties (Lai-Sang Young towers) will give us as well the existence of an absolutely continuous invariant measure and we will complete it by providing  informations on the behavior of the density. It is interesting to observe that in the class of maps considered by Grossmann and Horner on the interval $[-1,1]$ (see Sect. 6), the analog of (\ref{AA})  is given by the following map:

\begin{equation}\label{HH}
\tilde{S}(x)= 1-2\sqrt{|x|} \, .
\end{equation}
This map was investigated by Hemmer in 1984 \cite{HH}: he also computed by  inspection the invariant density which is $\rho(x)=\frac{1}{2}(1-x)$ and the Lyapunov exponent (simply equal to $1/2$), but he only argued about a slow decay of correlations. We will show in Sect. 6 how to recover the qualitative behavior of this density (and of all the others in the Grossmann and Horner class).\\

In this paper we study the one-parametric family of continuous maps $T$ (Fig.~1) which are $C^1$ on $\mathbb{T}/\{0\}$, $C^2$ on $\mathbb{T}/(\{0\}\cup \{1\})$
 and  are implicitly defined  on the circle by the equations:
$$
x=\left\{
\begin{aligned}
& \frac{1}{2\gamma}(1+T(x))^{\gamma}   & \hspace{0.6cm} \text{if}  \hspace{0.6cm} 0\le x\le\frac{1}{2\gamma}\\
& T(x)+\frac{1}{2\gamma}(1-T(x))^{\gamma} & \hspace{0.6cm} \text{if}   \hspace{0.6cm}\frac{1}{2\gamma}\le x\le 1
\end{aligned}
\right.
$$
and for negative values of $x$ by putting $T(-x)=-T(x)$. 
We assume that parameter $\gamma>1$. Note that when $\gamma=1$ the map is continuous  with constant derivative equal to $2$ and is the classical doubling map.  The point 1 is a fixed point with derivative equal to $1$, while at 0 the derivative becomes infinite. The map leaves the Lebesgue measure $m$  invariant (it is straightforward to check that the Perron-Frobenius operator has $1$ as a fixed point). We will prove in the next sections the usual bunch of statistical properties: decay of correlations (which, due to the parabolic fixed point, turns out to be polynomial with the rate found in~\cite{GH}); convergence to Stable Laws   
and  large deviations; statistics of recurrence. All these results will follow from existing techniques, especially  towers, combined  with the distortion bound  proved in the next section. Distortion will in fact allows us to  induce with the first return map on each cylinder of a countable Markov partition associated to $T$.
Actually one could induce on a suitable interval only (called $I_0$ in the following):
the proof we give is intented to provide disortion on {\em all} cylinders of the countable Markov partition covering {\em mod $0$} the whole space $[-1,1]$, since this is necessary in order to apply the inducing technique of \cite{BSTV} which will give us the statistical features of recurrence studied in Sect. 5: distributions of first return and hitting times, Poissonian statistics for the number of visits, extreme values laws.

\section{Distortion}

{\emph Notations}: With $a_n\approx b_n$ we mean that there exists a constant $C\ge 1$ such that $C^{-1}b_n\le a_n\le C b_n$ for all $n\ge 1$;  with $a_n\lesssim b_n$ we mean that there exists a constant $C\ge 1$ such that $\forall n\ge 1$, $a_n\le C b_n$; with $a_n\sim b_n$ we mean that $\lim_{n\to \infty} \frac{a_n}{b_n} =1$.
 We will also use the symbol "${\cal O}$" in the usual sense. Finally we denote with $|A|$ the diameter of the set $A$.\\

 There is a countable Markov partition $\{I_m\}_{m\in\mathbb{Z}}$  associated to this map; the partition is built $mod$  $m$  as follows:
$I_m=(a_{m-1},a_m)$ for all $m\in\mathbb{Z}^{\ast}$ and $I_0=(a_{0-},a_{0+})/\{0\}$, 
 where, denoting with $T_+=T_{|(0,1)}$ and with $T_-=T_{|(-1,0)}$:
\begin{equation}
a_{0+}=\frac{1}{2\gamma}~~,~~ a_{0-}=-\frac{1}{2\gamma} ~~~~\text{and}~~~~ a_i=T_+^{-i}a_{0+}~~,~~ a_{-i}=T_-^{-i}a_{0-},\quad   i\ge 1 \, .\nonumber 
 \end{equation}
\newline
Then we define $\forall i\ge 1$:
\begin{equation}
b_{-i}=T_-^{-1}a_{i-1}~~~~~~\text{and}~~~~b_{i}=T_+^{-1}a_{-(i-1)} \, .\nonumber
\end{equation}
\begin{figure}[t]
\centerline{\epsfxsize=12.cm \epsfbox{piko-bn.eps}}
\end{figure}
\newline

We  now state without proof a few results which are direct consequences of the definition of the map.
\begin{lemma}\label{L1}
\begin{enumerate}
\item When $x\rightarrow 1^-$: $T(x)= 1-(1-x)-\frac{1}{2\gamma}(1-x)^{\gamma}+$${\cal O}$ $((1-x)^{\gamma})$
\item When $x\rightarrow 0^+$: $T(x)= -1+(2\gamma)^{\frac{1}{\gamma}}x^{\frac{1}{\gamma}}\, .$ 
\end{enumerate}
\end{lemma}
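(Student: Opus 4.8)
The plan is to read both expansions directly off the implicit equations defining $T$. Statement (2) is actually an exact algebraic identity on the left branch, so no estimate is needed there; statement (1) will follow from inverting the defining relation near the parabolic fixed point $x=1$ and bootstrapping.

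For (2), note that for $0\le x\le\frac{1}{2\gamma}$ the value $T(x)$ solves $x=\frac{1}{2\gamma}(1+T(x))^{\gamma}$, and on this branch $1+T(x)\ge 0$ (the branch is mapped onto an interval whose left endpoint is $-1$). Hence $(1+T(x))^{\gamma}=2\gamma x$ and, taking the nonnegative $\gamma$-th root, $T(x)=-1+(2\gamma)^{1/\gamma}x^{1/\gamma}$; this holds for every $x$ in the branch, in particular as $x\to 0^{+}$.

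For (1), I would work on the branch $\frac{1}{2\gamma}\le x\le 1$, where $x=T(x)+\frac{1}{2\gamma}(1-T(x))^{\gamma}$. Set $v:=1-x$ and $u:=1-T(x)$; since $T$ is continuous with $T(1)=1$ (the fixed point of derivative $1$), one has $u\to 0^{+}$ as $v\to 0^{+}$, so we may stay in a one-sided neighbourhood of $u=0$. The defining equation becomes $v=u-\frac{1}{2\gamma}u^{\gamma}=:\phi(u)$, where $\phi$ is $C^{1}$ near $0$ with $\phi(0)=0$ and $\phi'(0)=1$, hence a local diffeomorphism; thus $u=\phi^{-1}(v)$ is well defined and smooth for small $v>0$. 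To extract the coefficients I would iterate $u=v+\frac{1}{2\gamma}u^{\gamma}$: from $u=v(1+o(1))$ one gets $u=v+\frac{1}{2\gamma}v^{\gamma}+\mathcal{O}(v^{2\gamma-1})$, and a further substitution sharpens the remainder if wanted. Since $\gamma>1$ forces $2\gamma-1>\gamma$, the remainder is $o(v^{\gamma})$, so $T(x)=1-u=1-(1-x)-\frac{1}{2\gamma}(1-x)^{\gamma}+\mathcal{O}((1-x)^{2\gamma-1})$, which yields the claim (the stated remainder $\mathcal{O}((1-x)^{\gamma})$ being a weaker form of this).

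I do not expect a genuine obstacle: as the text says, both facts are ``direct consequences of the definition''. The only points that deserve a line of justification are the choice of sign of the $\gamma$-th root in (2) and the fact that $u\to 0$ as $v\to 0$ in (1), both immediate from continuity of $T$ together with $T$ mapping the left branch onto an interval with endpoint $-1$ and $T(1)=1$; once these are noted, the inversion of $\phi$ and the bootstrap are routine.
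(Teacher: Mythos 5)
Your proof is correct; the paper states this lemma without proof as a ``direct consequence of the definition'', and your argument is exactly that direct verification: item (2) is an exact inversion of the left-branch equation (with the sign of the root fixed as you note), and item (1) follows from the bootstrap of $u=v+\frac{1}{2\gamma}u^{\gamma}$ near the fixed point. Your remainder $\mathcal{O}\bigl((1-x)^{2\gamma-1}\bigr)$ is in fact sharper than the stated $\mathcal{O}\bigl((1-x)^{\gamma}\bigr)$, so nothing is missing.
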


\begin{lemma}\label{L2} 
We have for all $n\ge0$, $a_{\pm(n+1)}=a_{\pm n}+\frac{1}{2\gamma}(1-a_{\pm n})^{\gamma}$ and:
\begin{eqnarray}
a_n&\sim& ~1-\left(\frac{2\gamma}{\gamma-1}\right)^{\frac{1}{\gamma-1}}\frac{1}{n^{\frac{1}{\gamma-1}}}\nonumber\\
a_{-n}&\sim& -1+\left(\frac{2\gamma}{\gamma-1}\right)^{\frac{1}{\gamma-1}}\frac{1}{n^{\frac{1}{\gamma-1}}}\nonumber\\
l_n:=\length[a_{n-1},a_{n}]&\sim& \frac{1}{2\gamma}\left(\frac{2\gamma}{\gamma-1}\right) ^{\frac{\gamma}{\gamma-1}}\frac{1}{n^{\frac{\gamma}{\gamma-1}}} \quad \quad{n>1}\nonumber\\
\vert b_{\pm (n+1)}\vert&\sim& \frac{1}{2\gamma}\left(\frac{2\gamma}{\gamma-1}\right) ^{\frac{\gamma}{\gamma-1}}\frac{1}{n^{\frac{\gamma}{\gamma-1}}}, \quad \quad{n>1} \, .\nonumber
\end{eqnarray}
\end{lemma}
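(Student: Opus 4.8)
\emph{Proof proposal.} The plan is to first convert the implicit definition of $T$ into \emph{exact} recursions for the partition points, and then to do a standard asymptotic analysis of the resulting scalar recursion. Since $a_{n+1}=T_+^{-1}(a_n)$ and, for $n\ge 0$, the preimage $a_{n+1}$ lies in $[\tfrac1{2\gamma},1]$, plugging $x=a_{n+1}$, $T(x)=a_n$ into the second line of the implicit definition gives at once $a_{n+1}=a_n+\tfrac1{2\gamma}(1-a_n)^{\gamma}$; the corresponding statement for negative indices follows from the symmetry $T(-x)=-T(x)$, which yields $a_{-n}=-a_n$. From this recursion one checks inductively that $0<a_n<1$ (using $(1-a_n)^\gamma\le 1-a_n$ since $\gamma>1$), that $(a_n)$ is increasing, and, passing to the limit in the recursion, that $a_n\uparrow 1$; hence $u_n:=1-a_n\downarrow 0$.

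The core step is the rate for $u_n$. The recursion becomes $u_{n+1}=u_n-\tfrac1{2\gamma}u_n^{\gamma}$, the familiar Pomeau-Manneville type recursion. I would pass to $w_n:=u_n^{1-\gamma}$: from $w_{n+1}=u_n^{1-\gamma}\bigl(1-\tfrac1{2\gamma}u_n^{\gamma-1}\bigr)^{1-\gamma}$ and the expansion $(1-\varepsilon)^{1-\gamma}=1+(\gamma-1)\varepsilon+{\cal O}(\varepsilon^2)$ one gets $w_{n+1}-w_n=\tfrac{\gamma-1}{2\gamma}+{\cal O}(u_n^{\gamma-1})$, and the error tends to $0$ precisely because $\gamma>1$ and $u_n\to 0$. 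By Stolz--Ces\`aro, $w_n/n\to\tfrac{\gamma-1}{2\gamma}$, i.e. $u_n^{\gamma-1}\sim\tfrac{2\gamma}{(\gamma-1)n}$, whence $u_n\sim\bigl(\tfrac{2\gamma}{\gamma-1}\bigr)^{1/(\gamma-1)}n^{-1/(\gamma-1)}$. This is the first asymptotic of the lemma, and the second is immediate from $a_{-n}=-a_n$.

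For the last two lines I would use exact identities and then substitute. The recursion gives $l_n=a_n-a_{n-1}=\tfrac1{2\gamma}u_{n-1}^{\gamma}$ exactly, so inserting the asymptotics of $u_{n-1}$ and using $(n-1)^{-\gamma/(\gamma-1)}\sim n^{-\gamma/(\gamma-1)}$ gives the stated rate for $l_n$. For $b_{n+1}=T_+^{-1}(a_{-n})$, note that $a_{-n}=-a_n$ is negative and tends to $-1$, so $b_{n+1}\in(0,\tfrac1{2\gamma})$ and $b_{n+1}\to 0$; on that interval the formula of Lemma~\ref{L1}(2) is in fact an \emph{exact} identity (it is just the first branch of the implicit definition solved for $T$), so $a_{-n}=-1+(2\gamma)^{1/\gamma}b_{n+1}^{1/\gamma}$, giving $|b_{n+1}|=b_{n+1}=\tfrac1{2\gamma}(1-a_n)^{\gamma}=\tfrac1{2\gamma}u_n^{\gamma}$; substituting the asymptotics of $u_n$ yields the last line, and $|b_{-(n+1)}|=|b_{n+1}|$ again by symmetry.

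Everything here is routine except the asymptotic analysis of $u_{n+1}=u_n-\tfrac1{2\gamma}u_n^{\gamma}$, and even that is a classical argument (the one used for Pomeau--Manneville maps); the only point requiring care is keeping track of the constants $\bigl(\tfrac{2\gamma}{\gamma-1}\bigr)^{1/(\gamma-1)}$ and $\bigl(\tfrac{2\gamma}{\gamma-1}\bigr)^{\gamma/(\gamma-1)}$ through the change of variable $w_n=u_n^{1-\gamma}$ and the Ces\`aro step, so I do not expect a genuine obstacle beyond bookkeeping.
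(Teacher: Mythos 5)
Your proposal is correct, and since the paper states Lemma~\ref{L2} without proof (as a ``direct consequence of the definition''), your argument is precisely the intended route: read the exact recursion $a_{n+1}=a_n+\tfrac1{2\gamma}(1-a_n)^{\gamma}$ off the second branch of the implicit definition, use the symmetry $a_{-n}=-a_n$, and extract the rate via the standard Pomeau--Manneville analysis of $u_{n+1}=u_n-\tfrac1{2\gamma}u_n^{\gamma}$ through $w_n=u_n^{1-\gamma}$ and Ces\`aro, after which $l_n=\tfrac1{2\gamma}u_{n-1}^{\gamma}$ and $|b_{\pm(n+1)}|=\tfrac1{2\gamma}u_n^{\gamma}$ (from the exact form of the first branch) give the last two asymptotics. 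No gaps; your bookkeeping of the constants is right.
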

We now induce on the interval $I_m:=(a_{-m},a_m)/\{0\}$  and provide a  bounded distortion estimate for the first return map.
We define $Z_{m,p}=Z^+_{m,p}\cup Z^-_{m,p}$, where: $Z^{+}_{m,1}:=(b_{m+1},a_{m})$, $Z^{-}_{m,1}:=(a_{-m}, b_{-(m+1)})$ and $Z^{+}_{m,p>1}:= (b_{m+p},b_{m+p-1})$, $Z^{-}_{m,p>1}:= (b_{-(m+p-1)},b_{-(m+p)})$. 
Note that $I_m=\cup_{p\ge1}Z_{m,p}$ and that the  first return map   $\widehat{T}=I_m\rightarrow I_m$  acts on each  $Z_{m,p}$ as $\widehat{T}=T^p$ and in particular:
\begin{equation*}
T^p(Z^+_{m,p})=\left\{ 
\begin{aligned}
&  (a_{-m},a_{m-1})\quad &  p=1 \\ 
& (a_{-m},a_{-(m-1)})\quad & p>1
\end{aligned}
\right.
\qquad \quad
T^p(Z^-_{m,p})=\left\{ 
\begin{aligned}
&  (a_{-(m-1)},a_{m})\quad & p=1&  \\ 
& (a_{m-1},a_{m})\quad   & p>1& \, .
\end{aligned}
\right.
\end{equation*}
We finally  observe that the induced map $\widehat{T}$  is uniformly expanding in the sense that for each $m$ and $p$  there exists $\beta>1$ such that $|D\widehat{T}(x) |>\beta$, $\forall x\in I_m$.\footnote{Using the chain rule we can see  that  $\beta \equiv \inf_{x \in Z_{m,1}} |DT(x)|>1$.} 
\begin{proposition}[Bounded distortion]
Let us induce on $I_m$; then there exists a constant $K>0$ that depends on m, such that for each $m$ and $p$ and for all $x,y\in Z_{m,p}$, we have:
$$
\left|\frac{DT^p(x)}{DT^p(y)}\right|\leq e^{K |T^p(x)-T^p(y)| }\le e^{2K}\, .
$$
\end{proposition}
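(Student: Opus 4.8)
The plan is to write, via the chain rule, $\log\big|DT^p(x)/DT^p(y)\big|=\sum_{k=0}^{p-1}\big(\log|DT(T^kx)|-\log|DT(T^ky)|\big)$, and to bound each summand by the elementary inequality $\big|\log|DT(u)|-\log|DT(v)|\big|\le\sup_{J}|D^2T/DT|\cdot|u-v|$, with $J$ the interval spanned by $u$ and $v$. By the symmetry $T(-x)=-T(x)$ it is enough to treat the branches $Z^{+}_{m,p}$. The case $p=1$ is immediate: for the fixed $m$ the set $Z^{+}_{m,1}=(b_{m+1},a_m)$ is compactly contained in $\mathbb{T}\setminus\{0,1\}$, where $T$ is $C^2$ and $|DT|\ge\beta>1$, so $\big|\log|DT(x)/DT(y)|\big|\le\sup_{Z^{+}_{m,1}}|D^2T/DT|\cdot|x-y|\le\beta^{-1}\sup_{Z^{+}_{m,1}}|D^2T/DT|\cdot|Tx-Ty|$ and $K$ is taken accordingly. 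From now on $p\ge2$.

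Write $J_k$ for the interval with endpoints $T^kx,T^ky$, $x,y\in Z^{+}_{m,p}$. From the images displayed just before the statement, $J_0\subset Z^{+}_{m,p}$ and, for $1\le k\le p-1$, $J_k\subset I_{-(m+p-1-k)}$, each lying in a single partition element on which $T$ is $C^2$; thus the orbit performs one strongly expanding step leaving a neighbourhood of $0$ and then $p-1$ steps drifting slowly away from the neutral fixed point $\pm1$. The two quantitative inputs I would read off from Lemmas~\ref{L1}--\ref{L2} (and the mirror expansion at $-1$) are: on $I_{-n}$, $|D^2T/DT|\approx(1+x)^{\gamma-2}\approx n^{-(\gamma-2)/(\gamma-1)}$ while $|I_{-n}|=l_n\approx n^{-\gamma/(\gamma-1)}$, so that $\sup_{I_{-n}}|D^2T/DT|\cdot|I_{-n}|\lesssim n^{-2}$; and near $0$, $|D^2T/DT|\approx\tfrac{\gamma-1}{\gamma}\,x^{-1}$, hence $\sup_{Z^{+}_{m,p}}|D^2T/DT|\approx b_{m+p}^{-1}$, while $|Z^{+}_{m,p}|=b_{m+p-1}-b_{m+p}\approx b_{m+p}/(m+p)$, so that $\sup_{Z^{+}_{m,p}}|D^2T/DT|\cdot|Z^{+}_{m,p}|\lesssim(m+p)^{-1}$. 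This last cancellation at the cusp is the point with no counterpart in the purely parabolic (Pomeau--Manneville) setting.

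With these in hand I would first obtain a weak bound: using only $|J_k|\le|T^k(Z^{+}_{m,p})|$,
\[
\Big|\log\big|DT^p(x)/DT^p(y)\big|\Big|\;\le\;\sup_{Z^{+}_{m,p}}\big|\tfrac{D^2T}{DT}\big|\,|Z^{+}_{m,p}|+\sum_{n=m}^{m+p-2}\sup_{I_{-n}}\big|\tfrac{D^2T}{DT}\big|\,l_n\;\le\;\frac{C}{m+p}+C\sum_{n\ge m}\frac1{n^2}\;=:\;C_m,
\]
uniformly in $p$. The identical estimate applied to $T^{p-k}$ on $T^k(Z^{+}_{m,p})$ — whose forward orbit visits only the cylinders $I_{-n}$ with $m\le n\le m+p-1-k$, hence stays away from $0$ — shows each such $T^{p-k}$ has distortion $\le e^{C_m}$, and via the mean value theorem this upgrades the length control to $|J_k|\le e^{C_m}\,|T^k(Z^{+}_{m,p})|\,|T^p(Z^{+}_{m,p})|^{-1}\,|J_p|$ for $1\le k\le p-1$ (and similarly for $k=0$, now using the distortion of $T^p$ on $Z^{+}_{m,p}$ just established). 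Since $|T^p(Z^{+}_{m,p})|$ is bounded below in terms of $m$, feeding this back into the sum and invoking the two cancellations again gives
\[
\Big|\log\big|DT^p(x)/DT^p(y)\big|\Big|\;\le\;\frac{e^{C_m}}{|T^p(Z^{+}_{m,p})|}\Big(\frac1{m+p}+\sum_{n\ge m}\frac1{n^2}\Big)\,|T^px-T^py|\;=:\;K\,|T^px-T^py|,
\]
with $K=K(m)$; and $e^{K|T^px-T^py|}\le e^{2K}$ since $T^px,T^py$ lie in the single interval $T^p(Z^{+}_{m,p})$, of diameter at most $2$.

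The step I expect to be the real obstacle is precisely this upgrade from a uniform distortion constant to the sharp bound proportional to $|T^px-T^py|$: because these first-return orbits spend almost all of their time near the neutral fixed point, where $T$ has no uniform expansion, $|J_k|$ does not decay geometrically in $|J_p|$, and one is forced into the bootstrap through the already-proven weak distortion of the intermediate iterates on full partition elements — organised so that every intermediate orbit stays clear of the cusp at $0$. Carrying out the two displayed cancellations precisely from Lemmas~\ref{L1}--\ref{L2}, in particular the cusp estimate $\sup|D^2T/DT|\cdot|Z^{+}_{m,p}|\lesssim(m+p)^{-1}$, is the other place where something beyond the standard parabolic analysis is needed.
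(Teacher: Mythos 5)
Your proof is correct and follows essentially the same strategy as the paper's: chain rule plus symmetry reduction, the two cancellations $\sup_{I_{-n}}|D^2T/DT|\cdot l_n\lesssim n^{-2}$ away from the cusp and $\sup_{Z^{+}_{m,p}}|D^2T|\cdot|Z^{+}_{m,p}|=\mathcal O(1)$ at the cusp, a first uniform ($p$-independent) distortion constant, and then a mean-value-theorem bootstrap converting the intermediate lengths $|J_k|$ into multiples of the final displacement. The only real difference is in that bootstrap: the paper handles the steps $q\ge1$ by monotonicity of $DT$ near the parabolic point, comparing $DT^{p-1-q}$ with its value at the partition endpoints $a_{m+p-q}$ and telescoping the resulting product, and invokes the uniform distortion bound (its inequality (\ref{B5})) only for the $q=0$ cusp term; you instead apply the uniform distortion of the tail maps $T^{p-k}$ on $T^k(Z^{+}_{m,p})$ uniformly for every $k$, which is slightly more systematic and avoids the monotonicity/telescoping argument at the price of the extra factor $e^{C_m}/|T^p(Z^{+}_{m,p})|$ in $K(m)$ — harmless since $|T^p(Z^{+}_{m,p})|$ is bounded below by $l_m$. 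Both routes yield $K$ depending on $m$ exactly as in the statement.
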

\begin{remark} \label{remark}
The cylinder $Z_{m,p}$ is the disjoint union of the two open intervals $Z^+_{m,p}$ and $ Z^-_{m,p}$ sitting on the opposite sides of $0$ (see above). Whenever $x$ and $y$ belongs to  different components, we proceed by first noticing that $DT^p(x)=DT^p(-x)$ and $-x$ sits now in the same component as $y$. By exploiting the fact that $T^p$ is odd we get
$$
\left|\frac{DT^p(x)}{DT^p(y)}\right|=\left|\frac{DT^p(-x)}{DT^p(y)}\right|\leq e^{ K |T^p(-x)-T^p(y)| }\leq e^{ K|-T^p(x)-T^p(y)| } \leq e^{ K |T^p(x)-T^p(y)| } \, .
$$
and we can thus concentrate on the case when $x$ and $y$ are taken in the same open component (see below).
\end{remark}
\begin{proof}  We denote with $l_m$ the length of the interval $(a_{m-1}, a_m)$ (when $m=0$, $l_0$= length of $ (0,a_{0+})$). 
We start by observing that
\begin{eqnarray}
\left|\frac{DT^{p}(x)}{DT^{p}(y)}\right|&=&\exp\left[ \sum_{q=0}^{p-1}\left(\log |DT(T^qx)|-\log |DT(T^qy)| \right) \right]\nonumber\\
&=&\exp\left[ \sum_{q=0}^{p-1}\left|\frac{D^2T(\xi)}{DT(\xi)}\right||T^qx-T^qy|\right] \, ,\label{B0}
\end{eqnarray}
where $\xi$ is a point between $T^qx$ and $T^qy$.

We divide the cases $p=1$ and $p>1$.
\begin{itemize}
\item $p=1$
\newline
For $(x,y)\in Z^-_{m,1}$ (see Remark(\ref{remark}) above),  using $|x-y|<|T(x)-T(y)|$, we directly get:\\
$$
\left|\frac{DT(x)}{DT(y)}\right| \leq \exp{\left[ K_1|T(x)-T(y)| \right]}\, ,
$$
where  $K_1=\sup_{(Z^-_{m,1})} D^2T=D^2T(a_m)$.
\item $p>1$
\newline
Start with $x,y \in Z^{-}_{m,p}$ (see Remark(\ref{remark}) above), then $Tx,Ty\in(a_{m+p-2},a_{m+p-1})$; $T^2x,T^2y\in (a_{m+p-3},a_{m+p-2})$; $ ~\ldots~$; $T^{p-1}x,T^{p-1}y\in (a_{m},a_{m+1})$, we have:
\begin{eqnarray}
(\ref{B0})&\leq& \exp\left[ \frac{\sup_{(Z^{-}_{m,p})}\big(|D^2T|\big)}{\inf_{(Z^{-}_{m,p})}\big(|DT|\big)}|x-y|+ \sum_{q=1}^{p-1}\frac{\sup_{(a_{m+p-q-1},a_{m+p-q})}\big(|D^2T|\big)}{\inf_{(a_{m+p-q-1},a_{m+p-q})}\big(|DT|\big)}|T^qx-T^qy|\right]\nonumber\\
&\leq&   \exp \left[ \sup_{(Z^{-}_{m,p})}\big( |D^2T| \big) |x-y| + \sum_{q=1}^{p-1}  \sup_{(a_{m+p-q-1},a_{m+p-q})}{\big(|D^2T| \big)} |T^qx-T^qy| \right] \, . \label{B1}
\end{eqnarray}
\newline
To continue we need the following
\begin{lemma}\label{L3}
For $x,y \in Z^-_{m,p}$ (see Remark(\ref{remark}) above) we have:
\newline
\\
(i)  $\quad \sum_{q=1}^{p-1}  \sup_{(a_{m+p-q-1},a_{m+p-q})}\big( |D^2T| \big) |T^qx-T^qy| \, \le \,   C_1 |T^{p-1}Z| $\\
\\
(ii) $\quad \sup_{Z^-_{m,p}} \big(|D^2T| \big) |x-y| \, \le \,  C_2\frac{|T^{p-1}Z|}{ l_{m+1} } \, ,  $\\
\newline
where we set for convenience  $Z$ the interval with endpoints $x$ and $y$.
\end{lemma}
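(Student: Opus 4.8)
The plan is to derive both inequalities from two elementary ingredients, each of which uses only the $C^2$-regularity of $T$ off $\{0,\pm1\}$ together with the explicit local forms of Lemma~\ref{L1} and the asymptotics of Lemma~\ref{L2}. Ingredient (a) is a \emph{single-branch} distortion bound: there is a universal $C\ge1$ so that for every $n\ge1$ and every subinterval $J\subseteq I_n$ one has $C^{-1}l_{n-1}/l_n\le|T(J)|/|J|\le C\,l_{n-1}/l_n$, and the analogous bound for the single branch $T\colon Z^-_{m,p}\to I_{m+p-1}$. This is not circular: by the mean value theorem $|T(J)|/|J|=|DT(\xi)|$ and $l_{n-1}/l_n=|DT(\xi')|$ with $\xi,\xi'\in I_n$, so the ratio is at most $\exp(\sup_{I_n}|D^2T/DT|\cdot l_n)$; using $DT\to1$ at $1$ and Lemma~\ref{L1}(1) one gets $\sup_{I_n}|D^2T/DT|\approx(1-a_n)^{\gamma-2}\approx n^{-(\gamma-2)/(\gamma-1)}$ while $l_n\approx n^{-\gamma/(\gamma-1)}$, whose product is $\approx n^{-2}$ and hence stays bounded (indeed tends to $0$); the same computation near $0$ via Lemma~\ref{L1}(2) and $|b_{\pm(n+1)}|\sim l_n$ controls the $Z^-_{m,p}$ branch. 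Ingredient (b) is then the summability $\sum_{n\ge1}\sup_{I_n}|D^2T|\,l_n\approx\sum_n n^{-2}<\infty$, which falls out of the same exponent computation ($-(\gamma-2)/(\gamma-1)-\gamma/(\gamma-1)=-2$ for every $\gamma>1$).

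From (a) I would telescope along the parabolic branches $T\colon I_j\to I_{j-1}$ to obtain, for $x,y\in Z^-_{m,p}$ and $1\le q\le p-1$, the comparison $|T^qx-T^qy|=|T^qZ|\approx(l_{m+p-q}/l_{m+1})\,|T^{p-1}Z|$, and, peeling off only the first, strongly expanding step at $0$, $|x-y|=|Z|\approx(l_{m+p-1}/l_{m+1})\,|T^{p-1}Z|/|DT(\zeta)|$ for some $\zeta\in Z$. Then (i) is immediate:
$$
\sum_{q=1}^{p-1}\sup_{I_{m+p-q}}|D^2T|\,|T^qZ|\;\lesssim\;\frac{|T^{p-1}Z|}{l_{m+1}}\sum_{q=1}^{p-1}\sup_{I_{m+p-q}}|D^2T|\,l_{m+p-q}\;\le\;\frac{|T^{p-1}Z|}{l_{m+1}}\sum_{n\ge1}\sup_{I_n}|D^2T|\,l_n,
$$
and the last sum is finite by (b); absorbing $l_{m+1}^{-1}$ (and the finitely many small-index terms) into a constant $C_1$ depending on $m$ closes (i). For (ii) I would write $\sup_{Z^-_{m,p}}|D^2T|\le\big(\sup_{Z^-_{m,p}}|D^2T|/|DT|\big)\,\sup_{Z^-_{m,p}}|DT|\lesssim\big(\sup_{Z^-_{m,p}}|D^2T|/|DT|\big)\,|DT(\zeta)|$ (the last step again by (a)) and combine with the expression for $|Z|$:
$$
\sup_{Z^-_{m,p}}|D^2T|\,|x-y|\;\lesssim\;\Big(\sup_{Z^-_{m,p}}\frac{|D^2T|}{|DT|}\Big)\,|DT(\zeta)|\cdot\frac{l_{m+p-1}}{l_{m+1}}\cdot\frac{|T^{p-1}Z|}{|DT(\zeta)|}\;\approx\;\frac{1}{l_{m+p-1}}\cdot l_{m+p-1}\cdot\frac{|T^{p-1}Z|}{l_{m+1}}\;=\;\frac{|T^{p-1}Z|}{l_{m+1}},
$$
where the key cancellation uses $\sup_{Z^-_{m,p}}|D^2T|/|DT|\approx|x|^{-1}\approx l_{m+p-1}^{-1}$, i.e. Lemma~\ref{L1}(2) together with the matching of scales $|b_{\pm(n+1)}|\sim l_n$ from Lemma~\ref{L2}. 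This yields (ii) with $C_2$ again depending only on $m$ and the universal constant $C$.

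The one genuine obstacle is of an a priori nature: every distortion and comparison estimate for iterates must be obtained \emph{without} assuming bounded distortion of iterates, which is exactly why the argument is routed through the single-branch bound (a) and a careful telescoping rather than through a bootstrap. The delicate point is (ii): a crude bound on $\sup_{Z^-_{m,p}}|D^2T|\,|x-y|$ blows up with $p$ because $D^2T$ is unbounded at $0$, and what rescues it is the precise fact recorded in Lemma~\ref{L2} that the cylinders abutting $0$ shrink at exactly the same polynomial rate $n^{-\gamma/(\gamma-1)}$ as the cylinders $l_n$ near the parabolic point $1$; thus the divergence $|x|^{-1}$ of $|D^2T/DT|$ on $Z^-_{m,p}$ is compensated to order one by the factor $l_{m+p-1}$ picked up when the first expanding step is peeled off. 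Everything else — in particular the value $-2$ of the exponent, which makes both series converge in the whole range $\gamma>1$ — is a routine consequence of Lemma~\ref{L2}.
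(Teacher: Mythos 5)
Your argument is sound, and for part (ii) it takes a genuinely different route from the paper's. For (i) the two proofs share the same skeleton — compare each intermediate length $|T^qZ|$ to $|T^{p-1}Z|$ via a ratio of cylinder lengths, then sum the series $\sup_{I_n}|D^2T|\,l_n={\cal O}(n^{-2})$ — but you obtain the comparison by chaining single-branch distortion bounds for $T\colon I_n\to I_{n-1}$, whereas the paper gets a one-sided comparison from the monotonicity of $DT$ near the parabolic point together with a mean-value telescoping of the ratios $(a_q-a_{q-1})/(a_{q+1}-a_q)$, cf.\ (\ref{B2})--(\ref{den}). For (ii) the paper first bootstraps the two-sided bound (\ref{B5}) on $|DT^j(x)/DT^j(y)|$ for all $j\le p-1$ out of (\ref{d1}), (\ref{sd}) and (\ref{B1}), and then compares the subinterval $Z$ with the full cylinder $Z^-_{m,p}$ at times $0$ and $p-1$, using $|T^{p-1}Z^-_{m,p}|=l_{m+1}$ and (\ref{sd}) once more; you instead peel off only the first, unbounded-derivative step by the mean value theorem and exploit the exact relation $|D^2T/DT|=\frac{\gamma-1}{\gamma}|x|^{-1}$ on $(a_{0-},0)$ together with the scale matching $|b_{\pm(n+1)}|\sim l_n$ of Lemma~\ref{L2}, so the divergence of $D^2T$ at $0$ is cancelled directly, with no bootstrap and no comparison against the full cylinder. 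Your version is more self-contained and makes the compensation mechanism explicit; the paper's version uses only the soft bound (\ref{sd}) rather than the exact local form, which is the shape of argument it then recycles for the Lorenz-type maps of Section 6. One point you must make explicit: ingredient (a) as you state it gives only a uniform constant per branch, and telescoping $p-1-q$ such bounds would a priori produce a factor $C^{p-1-q}$; what rescues the step — and what your own computation actually proves — is that the per-branch logarithmic distortion on $I_n$ is ${\cal O}(n^{-2})$, hence summable, so the accumulated constant along $I_{m+p-q}\to\cdots\to I_{m+1}$ is bounded uniformly in $p$ and $q$. Once that is said (and noting that your $C_1$, $C_2$ then carry a factor $1/l_{m+1}$ and so depend on $m$, exactly as allowed by the Proposition), the proof is complete.
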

\begin{proof}
(i) Denote $T^{p-1}x=z_x$ and $T^{p-1}y=z_y$; since the derivative is  decreasing on $(0,m)$ we have:
\begin{eqnarray}\label{B2}
 |T^qx-T^qy|\le  \frac{1}{DT^{p-1-q}(a_{m+p-q})}|z_x-z_y| \, .
\end{eqnarray}
Let's now consider the term:
\begin{equation}\label{B3}
DT^{p-1-q}(a_{m+p-q})=DT(a_{m+p-q})DT(Ta_{m+p-q})\ldots DT(T^{p-2-q}a_{m+p-q}) \, .
\end{equation}
Since for $q\ge 1$ and  $\xi_1\in (a_{q},a_{q+1})$:
$$
DT(a_{q})\geq DT(\xi_1)=\frac{T(a_{q+1})-T(a_{q})}{a_{q+1}-a_{q}}=\frac{a_{q}-a_{q-1}}{a_{q+1}-a_{q}} 
$$
it follows that
$$
(\ref{B3})\geq \frac{a_{m+p-q}-a_{m+p-q-1}}{a_{m+p+1-q}-a_{m+p-q}} \cdot \frac{a_{m+p-q-1}-a_{m+p-q-2}}{a_{m+p-q}-a_{m+p-q-1}}\ldots \frac{a_{m+2}-a_{m+1}}{a_{m+3}-a_{m+2}}
\geq \frac{a_{m+2}-a_{m+1}}{a_{m+p+1-q}-a_{m+p-q}} 
$$
and thus:\footnote{We have just proved that if $\xi$ is any point in $(a_{m+p},a_{m+p+1})$ 
(and the same result holds for its negative counterpart  $(a_{-(m+p+1)},a_{-(m+p)})$ as well) then $DT^p(\xi)\ge \frac{a_{m+2}-a_{m+1}}{a_{m+p+1}-a_{m+p}}$. In a similar way we can prove the lower bound: $DT^p(\xi)\le \frac{a_{0^+}}{a_{m+p-1}-a_{m+p-2}}$, for $p\ge 2$.
}
$$
\frac{1}{DT^{p-1-q}(a_{m+p-q})}\leq \frac{a_{m+p+1-q}-a_{m+p-q}}{a_{m+2}-a_{m+1}}.
$$
Moreover: $|z_x-z_y|\leq \vert T^{p-1}Z\vert$.
Finally:
\begin{eqnarray}\label{den}
(\ref{B2})\leq  \: \frac{a_{m+p+1-q}-a_{m+p-q}}{a_{m+2}-a_{m+1}}\:\vert T^{p-1}Z\vert \, .
\end{eqnarray}
Using lemmas \ref{L1} and \ref{L2}  we see that there exists a constant $C_0$ depending only on the map $T$ such that:
$$
\big( \sup_{(a_{m+q-1},a_{m+q})}|D^2T|\big)\: (a_{m+q+1}-a_{m+q})\le C_0\cdot \frac{1}{(q+m)^{\frac{\gamma-2}{\gamma-1}}(q+m)^{\frac{\gamma}{\gamma-1}}}= C_0\cdot\frac{1}{(q+m)^2}.
$$
Therefore the sum over $q=1,2,\dots$ is summable and there exists 
a constant $C_1$ such that for $x,y\in Z^-_{m,p}$:
\begin{equation}\label{d1}
\sum_{q=1}^{p-1} \big( \sup_{(a_{m+p-q-1},a_{m+p-q})} |D^2T| \big) |T^qx-T^qy| \leq  C_1\vert T^{p-1}Z\vert \, .
\end{equation}

(ii) In this case we need to control the behavior of the map close to $0$.  In particular,
by  using lemmas \ref{L1} and \ref{L2} (and the symmetry of $b_ {\pm i}$) we start by noticing  that 
\begin{eqnarray}\label{sd}
\big(\sup_{(b_{i+1},b_{i})}|D^2T|\big)|b_{i}-b_{i+1}|={\cal O}( \frac{i^{\frac{2\gamma-1}{\gamma-1}}}{i^{\frac{2\gamma-1}{\gamma-1}}})= 1.
\end{eqnarray}
Combining (\ref{d1}) and (\ref{sd}) with (\ref{B1}) we get that 
 there exists a constant $D_2$ so that for all $j\leq p-1$
\begin{eqnarray}\label{B5}
\frac{1}{D_2}\leq \left|\frac{DT^j(x)}{DT^j(y)}\right|\leq D_2.
\end{eqnarray}

Let's call $\alpha=b_{-(m+p-1)}, \beta=b_{-(m+p)}$ the end points of $Z^-_{m,p}$.
For $j_1,j_2\leq p-1$ there exist $\eta_1\in (x,y)$ and $\eta_2\in (\alpha,\beta)$ such that:
\begin{align*}
|T^{j_1}x-T^{j_1}y|&=DT^{j_1}(\eta_1)|x-y|,\\
|T^{j_2}\alpha-T^{j_2}\beta|&=DT^{j_2}(\eta_2)|\alpha-\beta|.
\end{align*}
The distortion bound (\ref{B5}) yields
$$
\frac{\left|T^{j_1}x-T^{j_1}y\right|}{\left|T^{j_1}\alpha-T^{j_1}\beta\right|}\leq D_2^2\frac{\left|T^{j_2}x-T^{j_2}y\right|}{\left|T^{j_2}\alpha-T^{j_2}\beta\right|}.
$$
If we now choose $j_1=0$ and $j_2=p-1$ then
$$
\big(\sup_{(\alpha,\beta)}|D^2T|\big) |x-y| \le D_2^2\big(\sup_{(\alpha,\beta)}|D^2T|\big)\frac{|\alpha-\beta|\cdot |T^{p-1}x-T^{p-1}y|}{|T^{p-1}\alpha-T^{p-1}\beta|} \, .
$$
Since $|T^{p-1}\alpha-T^{p-1}\beta|=l_{m+1}=a_{m}-a_{m+1}$  and  $x$ and $y$ to belong to $Z$ we get:
$$
\big(\sup_{(\alpha,\beta)}|D^2T|\big) |x-y| \le D_2^2\big(\sup_{(\alpha,\beta)}|D^2T|\big)\frac{|\alpha-\beta|\cdot |T^{p-1}Z|}{ l_{m+1} } 
$$
and using distortion bound (\ref{sd}) once more we have that there exist a constant $C_2$ such that:
$$
\big(\sup_{(\alpha,\beta)}|D^2T|\big) |x-y| \le C_2 \frac{|T^{p-1}Z|}{ l_{m+1} } \, .
$$

\end{proof}
\end{itemize}
By collecting lemma \ref{L3}(i) and \ref{L3}(ii) we see that the ratio $|DT^p(x)/DT^p(y)|$, $(x,y \in Z)$ is bounded as:
\begin{eqnarray}\label{d}
\left|\frac{DT^p(x)}{DT^p(y)}\right|\le \exp\left[C_2\frac{\vert T^{p-1}Z \vert}{l_{m+1}}+C_1\vert T^{p-1}Z \vert \right]\le \exp\left[K_2 \vert T^{p-1}Z \vert \right]
\end{eqnarray}
with $K_2=C_1+C_2/l_{m+1}$.

We finish the proof of the Proposition by choosing $K=\max (K_1 ,K_2 )$ 
\end{proof}

 \section{Decay of correlations }
 In this section and in the next we prove several statistical properties for our map: they are basically consequences of the distortion inequality got in the previous section matched with established techniques.

\begin{proposition}
The map $T$  enjoys polynomial decay of correlations (w.r.t.\ the Lebesgue measure $m$),   for H\"older continuous functions on $\mathbb{T}$. More precisely, for all H\"older $\varphi:\mathbb{T}\rightarrow \mathbb{R}$ and all $\psi\in L^{\infty}(\mathbb{T},m)$, we have:
$$
\left|\int (\varphi\circ T^n)\:\psi\:dm -\int \varphi\: dm\int \psi\:dm  \right|=\mathcal{O}\big(\frac{1}{n^{\frac{1}{\gamma-1}
}}\big)\, .
$$
\end{proposition}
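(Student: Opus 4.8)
The plan is to present $(T,m)$ as the quotient of a Young tower with polynomial return-time tails, and then to invoke the abstract decay-of-correlations estimate for such towers (the tower technique used in \cite{KDO,HLO}, in the polynomial regime as in \cite{LSY}). The bridge between the geometry of $T$ and that abstract theorem is precisely the bounded-distortion Proposition of Section~2. Concretely, fix $m\ge 1$ and consider the first-return map $\widehat T=T^{R}$ to the base $I_m=(a_{-m},a_m)\setminus\{0\}$, with $R\equiv p$ on $Z_{m,p}$. From the description preceding the Proposition, $\widehat T$ is Markov (each $T^{p}(Z^{\pm}_{m,p})$ is a union of elements of the partition $\{I_j\}$; after the finite refinement of the base that turns every branch into a full branch one obtains a genuine Gibbs--Markov map), it is uniformly expanding with $|D\widehat T|\ge\beta>1$, and — this is the only nontrivial analytic input — the Proposition gives $|\log D\widehat T(x)-\log D\widehat T(y)|\le K\,|\widehat T(x)-\widehat T(y)|$, i.e.\ the Jacobian of $\widehat T$ is log-Lipschitz in the natural metric on the base. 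Finally, return time $1$ occurs on $Z_{m,1}$, so the gcd of the return times is $1$ and the associated tower is mixing (alternatively, one invokes the topological mixing of $T$, which for $\gamma=2$ is exactly \cite{AA}). These are Young's standing hypotheses.

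Next I would estimate the tail of $R$. Since $R=p$ on $Z_{m,p}=Z^{+}_{m,p}\cup Z^{-}_{m,p}$ and, for $p>1$, $Z^{+}_{m,p}=(b_{m+p},b_{m+p-1})$ (symmetrically for $Z^{-}$), Lemma~\ref{L2} gives, writing $\delta:=\frac{\gamma}{\gamma-1}$,
\[
m(Z_{m,p})\;\approx\;\bigl(|b_{m+p-1}|-|b_{m+p}|\bigr)\;\approx\;(m+p)^{-\delta-1},
\]
hence
\[
m\bigl(\{x\in I_m:\,R(x)>n\}\bigr)=\sum_{p>n}m(Z_{m,p})\;\approx\;n^{-\delta}=n^{-\frac{\gamma}{\gamma-1}} .
\]
In particular $\int_{I_m}R\,dm\approx\sum_{p}p^{-\gamma/(\gamma-1)}<\infty$ for every $\gamma>1$, so the tower carries a finite absolutely continuous invariant measure with density bounded above and below (bounded distortion plus full branches); by ergodicity this measure is a constant multiple of Lebesgue $m$, which is the measure against which we want the statement.

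It then remains to quote the abstract theorem: a mixing Young tower whose return time obeys $m(R>n)=\mathcal O(n^{-\beta})$ with $\beta>1$ has decay of correlations $\mathcal O(n^{-(\beta-1)})$ for H\"older observables against $L^{\infty}$ observables on the quotient. Here $\beta=\delta=\gamma/(\gamma-1)>1$, so $\beta-1=\frac{1}{\gamma-1}$, and one gets $\bigl|\int(\varphi\circ T^{n})\psi\,dm-\int\varphi\,dm\int\psi\,dm\bigr|=\mathcal O\bigl(n^{-1/(\gamma-1)}\bigr)$ for every H\"older $\varphi$ on $\mathbb T$ and $\psi\in L^{\infty}(\mathbb T,m)$; one uses here that a H\"older function on $\mathbb T$ lifts to a function that is H\"older in the tower (separation-time) metric, which is automatic once the branches are full and distortion is controlled, since the diameters of the $n$-cylinders of $\widehat T$ shrink geometrically in the number of returns.

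\textbf{Expected main obstacle.} The conceptual content is all in Section~2; the remaining work is essentially bookkeeping, and the one point that really needs care is the combinatorial verification that the first-return map to a neighbourhood of $0$ is, after an innocuous splitting/refinement of the base, a bona fide full-branch Gibbs--Markov map — i.e.\ that the images $T^{p}(Z^{\pm}_{m,p})$ recorded before the Proposition can be completed to full branches without spoiling either the distortion bound or the order of magnitude $m(R>n)\approx n^{-\gamma/(\gamma-1)}$ of the tail. Once this is in place, the polynomial decay (and, from the same tower, the limit theorems and recurrence statistics promised in the introduction) follow by direct citation of the tower results combined with Lemma~\ref{L2} and the bounded-distortion Proposition.
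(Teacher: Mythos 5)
Your overall route is the paper's own: induce on an interval around the point with unbounded derivative, use the bounded-distortion Proposition to verify the regularity of the Jacobian of the first-return map in the separation-time metric, read off the tail $m(R>n)\approx n^{-\gamma/(\gamma-1)}$ from Lemma~\ref{L2}, and quote the tower estimate $\mathcal O\bigl(\sum_{k>n}m(R>k)\bigr)=\mathcal O\bigl(n^{-1/(\gamma-1)}\bigr)$. The tail computation, the aperiodicity remark (return time $1$ occurs), the identification of the invariant measure with Lebesgue, and the lifting of H\"older observables to the tower metric are all fine and coincide with what the paper does.

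The gap is exactly at the point you flag, and the remedy you propose does not work. For the base $I_m$ with $m\ge 1$ the first-return map is genuinely not full-branch: as recorded just before the Proposition, for $p>1$ the branch $Z^+_{m,p}$ is mapped onto the single cell $(a_{-m},a_{-(m-1)})$ and $Z^-_{m,p}$ onto $(a_{m-1},a_m)$, while the $p=1$ branches miss one cell. No ``finite refinement of the base'' can repair this: refining the domain partition never enlarges the image of a branch, so the hypothesis of the tower theorem you invoke (each base partition element mapped by $T^{R}$ bijectively onto the whole base) simply fails on $I_m$, $m\ge1$; what you have there is only the big-image property, which is enough for the Sarig--Gou\"ezel renewal argument (used in the paper for the optimal bounds) but not for the Young-tower statement you cite. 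The paper's resolution is to induce on $I_0$ and exploit the symmetry $T(-x)=-T(x)$: there $\widehat T$ maps $Z^+_{0,p}$ onto $(a_{0-},0)$ and $Z^-_{0,p}$ onto $(0,a_{0+})$, so each two-component cylinder $Z_{0,p}=Z^+_{0,p}\cup Z^-_{0,p}$ is mapped bijectively onto $I_0$ mod $0$, i.e.\ the branches are full with no refinement at all; the cross-component distortion and separation-time estimates needed to make this legitimate are precisely Remark~\ref{remark} and the two-component case treated in the verification of (\ref{s}). So your plan is salvageable, but only after replacing ``finite refinement of $I_m$'' by the choice $m=0$ together with the pairing of symmetric components (or, alternatively, by switching to a big-images/Gibbs--Markov renewal theorem, which is a different citation from the one you make).
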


\begin{proof}
We will use Lai-Sang Young's tower technique \cite{LSY}.
We build the  tower over the interval $I_0$ and we define the return time function  as the first return time:  $$\text{for all} ~x\in I_0,~~ 
R(x):=\min\{n\in \mathbb{N}^+~;~T^nx\in I_0\}:=\tau_{I_0}(x) \, .$$
The tower is thus defined by: 
$$\Delta=\{(x,l)\in I_0\times\mathbb{N}~;~l\leq\tau_{I_0}(x)-1 \} $$
and  the partition of the base $I_0$ is given by the cylinders $Z_{0,p}$ defined in the previous section. Recall that the dynamics on the tower is given by:
$$F(x,l)=
\left\{
\begin{aligned}
(x,l+1)~~~~~~&\text{if}~~~~l<\tau_{I_0}(x)-1\\
	(T^{\tau_{I_0}(x)}(x),0)~~~&\text{if}~~~~l=\tau_{I_0}(x)-1
\end{aligned}
\right.
$$
According to \cite{LSY}, the decay of correlations is governed by the asymptotics of $m\{x\in I_0~;~\tau_{I_0}(x)\geq n \}$ namely
$$m\{x\in I_0~;~\tau_{I_0}(x)>n\}=m(b_{-n},b_{n})\sim\frac{1}{\gamma}\left(\frac{2\gamma}{\gamma-1}\right)^{\frac{\gamma}{\gamma-1}}\frac{1}{(n-1)^{\frac{\gamma}{\gamma-1}}} \, .$$

Moreover  we must verify another important requirement of the theory; this will also be useful in the next section about limit theorems.  Let us first introduce the separation time $s(x,y)$ between two points $x$ and $y$ in $I_0$. Put $\hat T$ the first return map on $I_0$; we define $s(x,y)=\min_{n\ge 0}\{(\hat T^n(x), \hat T^n(y))  \ \mbox{lie in distinct} \ Z_{0,p}, p\ge 1\}$. We ask that $\exists C>0, \delta \in (0,1)$ such that $\forall x,y \in Z_{0,p}, p\ge 1$, we have
\begin{eqnarray}\label{s}
\left|\frac{D\hat T(x)}{D\hat T(y)}\right|\le \exp[C \delta^{s(\hat T(x), \hat T(y))}] \, .
\end{eqnarray}
Let us prove this inequality. Remember that the cylinder $Z_{0,p}$ is the disjoint union of two open components, $Z^+_{0,p}$ and  $Z^-_{0,p}$, which sit on the opposite sides of $0$. Suppose first that $x$ and $y$ stay in the same open component of some $Z_{0,p}$, $p\ge 1$,  and that 
 $s(\hat T(x),\hat T(y))=n$; then since the orbits (under $\hat T$) of the two points will be in the same cylinder  up to time $n-1$,  and on these cylinders $\hat T$ is monotone and uniformly expanding, $|D\hat T|\ge \beta>1$ (see footnote 1), we have  $|\hat T(x)-\hat T(y)|\le \beta^{-(n-1)}$.  Therefore by the distortion inequality  we get
\begin{equation}\label{separation}
\left|\frac{D \hat T(x)}{D \hat T(y)}\right|\le  \exp\left[ K \beta^{-(n-1)}\right]\le \exp[C \delta^{ s(\hat T(x),\hat T(y))}] \, ,
\end{equation}
where $C= K \beta $ and $\delta=\beta^{-1}$.
If instead $x,y$ lie in the two different open components of some   $Z_{0,p}$, $p\ge 1$, and again $s(\hat T(x),\hat T(y))=n$, this means that $-x$ and $y$ will have the same coding up to $n$; hence
$$
\left|\frac{D \hat T(x)}{D \hat T(y)}\right|=\left|\frac{D \hat T(-x)}{D \hat T(y)}\right| 
$$
$$
\le \exp[ K |\hat T(-x)-\hat T(y)| ]\le \exp\left[ K \beta^{-(n-1)} \right]\le \exp[C \delta^{ s(\hat T(x),\hat T(y))}]\, .
$$
According to \cite{LSY} the correlations decay satisfies 
$\left|\int (\varphi\circ T^n)\:\psi\:dm -\int \varphi\: dm\int \psi\:dm  \right|=\mathcal{O}(\sum_{k>n}m\{x\in I_0~;~\tau_{I_0}(x)\geq k\}$ and  the right hand side of this inequality behaves like
 $\mathcal{O}\big(n^{-\frac1{\gamma-1}}\big)$.
\end{proof}

{\em Optimal bounds} 
The previous result on the decay of correlations could be strengthened to produce a lower bound for the decay of correlations 
for integrable functions which vanish in a neighborhood of the indifferent fixed point. We will use for that the renewal technique introduced by Sarig \cite{SA} and succesively improved by Gou\"ezel \cite{GO2}. We first need that our original map is irreducible: this is a consequence of the already proved ergodicity, but one could shown directly by inspection that the countable Markov partition given by the preimages of zero has such a property. We moreover need additional properties that we directly formulate in our setting:
\begin{itemize}
\item Suppose we induce on $I_m=(a_{-m}, a_m)/\{0\}$ and call ${\cal Z}_m$  the Markov partition into the rectangles $Z_{m,p}$ with first return $p$. A cylinder $[d_0,d_1,\cdots, d_{n-1}]$ with $d_i\in {\cal Z}_m$ will be the set $\cap_{l=0}^{n-1}\hat T^{-i}d_l$. 

We first need that the jacobian of the first return map is locally H\"older continuous, namely that there exists $\theta<1$ such that:
$$
\sup |\log D\hat T(x)-\log D\hat T(y)|\le C \theta^n \, ,
$$
where the supremum is taken over all couples $x,y \in [d_0,d_1,\cdots, d_{n-1}]$, $d_i\in {\cal Z}_m$ and $C$ is a positive constant.
But this is  an immediate consequence of formula (\ref{separation}) with $\theta=\beta^{-1}$ and $C=K\beta$.  Using the separation time $s(\cdot,\cdot)$, we define $D_mf=\sup|f(x)-f(y)|/\theta^{s(x,y)}$, where $f$ is an integrable function on $I_m$ and the supremum is taken over all couples $x,y\in I_m$. We then put $||f||_{{\cal L}_{\theta,m}}\equiv ||f||_{\infty}+ D_mf$. We call ${\cal L}_{\theta,m}$ the space  of $\theta$-H\"older functions on $I_m$.
\item We need the so-called {\em big image} property, which means that the Lebesgue measure of the images, under $\hat T$, all the rectangles $Z_{m,p}\in {\cal Z}_m$ are uniformly bounded from below by a strictly positive constant. In our case, see section 3, these images are bounded from below by the length of the interval $(a_{-m}, a_m)$.
\item We finally need that $m(x\in I_m | \tau(x)>n)={\cal O}(n^{-\chi})$, for some $\chi>1$ (this is Gou\"ezel's assumption, which improves Sarig's one, asking for $\chi>2$). In our case by the construction developed in Sect. 3 we immediately get that
$m(x\in I_m | \tau(x)>n)=m( \cup_{p>n} Z_{m,p}) = (b_{-(m+n)}, b_{m+n}) \sim C (n+m)^{-b} = C n^{-b}(1+m/n)^{-b} \sim C n^{-b}$, where the constants $C$ and $b$ are the same as those given in the proof of Th. 4, precisely $C=\frac{1}{\gamma}\left(\frac{2\gamma}{\gamma-1}\right)^{\frac{\gamma}{\gamma-1}}$ and $b=\frac{\gamma}{\gamma-1}$.

\end{itemize} 
Under these assumptions, Sarig and Gou\"ezel proved a lower bound for the decay of correlations which we directly specialize to our map:
\begin{proposition}\label{OBU}
 There exists a constant $C$ such that for all $f$ which are $\theta$-H\"older and $g$ integrable and both supported in $I_m$ we have
 $$
 \left| Corr(\:f ,g\circ T^n) -(\sum_{k=n+1}^{\infty}m(x\in I_m | \tau(x)>n))\int g \:dm\int f\:dm  \right|\le C F_{\gamma}(n)||g||_{\infty}||f||_{{\cal L}_{\theta,m}}
 $$
 where $F_{\gamma}(n)=\frac{1}{n^{\frac{\gamma}{\gamma-1}}}$ if $\gamma<2$, $(\log n)/n^2$ if $\gamma=2$ and $\frac{1}{n^{\frac{2}{\gamma-1}}}$ if $\gamma>2$.\\Moreover, if $\int f\: dm=0$, then $\int (g\circ T^n)\:f\:dm={\cal O}(\frac{1}{n^{\frac{\gamma}{\gamma-1}}})$. Finally the central limit theorem holds for the observable $f$. 
 \end{proposition}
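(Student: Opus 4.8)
The plan is to read the statement off the operator renewal theorem of Sarig~\cite{SA}, in the sharpened form due to Gou\"ezel~\cite{GO2}, for which the four properties isolated above --- irreducibility, local H\"older regularity of $\log D\widehat T$ on $I_m$, the big image property, and the polynomial tail $m(x\in I_m:\tau(x)>n)=\mathcal O(n^{-b})$ with $b=\frac{\gamma}{\gamma-1}>1$ --- are precisely the hypotheses. Let $L$ be the Perron--Frobenius operator of $T$ with respect to $m$ and, for $f$ supported in $I_m$, put $R_nf:=\mathbf{1}_{I_m}L^n(\mathbf{1}_{\{\tau=n\}}f)$, so that $\widehat L:=\sum_{n\ge1}R_n$ is the transfer operator of the first return map $\widehat T$ on $I_m$ and the renewal convolution $T_n:=\mathbf{1}_{I_m}L^n\mathbf{1}_{I_m}=\sum_{k\ge1}\sum_{n_1+\cdots+n_k=n}R_{n_k}\cdots R_{n_1}$ satisfies $\int(g\circ T^n)\,f\,dm=\int_{I_m}g\cdot(T_nf)\,dm$ whenever $f,g$ are supported in $I_m$. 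Everything then reduces to an asymptotic expansion of $T_n$ as an operator on the space $\mathcal L_{\theta,m}$ defined above; passing to generating functions, $\widehat R(z)=\sum_{n\ge1}z^nR_n$ and $\widehat T(z)=\sum_{n\ge0}z^nT_n=(I-\widehat R(z))^{-1}$ for $|z|<1$.

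Next I would establish the required spectral picture. A Lasota--Yorke inequality for $\widehat L$ on $\mathcal L_{\theta,m}$ follows from the bounded distortion of the induced map proved in the previous section, together with the uniform expansion $|D\widehat T|\ge\beta>1$ and the big image property; hence $\widehat L=\widehat R(1)$ is quasi-compact with a simple leading eigenvalue $1$ and eigenprojection $Pf=\frac{1}{m(I_m)}\big(\int_{I_m}f\,dm\big)\mathbf{1}_{I_m}$ --- here one uses that $m|_{I_m}$ is $\widehat T$-invariant because $m$ is $T$-invariant, so the invariant density of the induced map is constant. Write $\widehat L=P+Q$ with $PQ=QP=0$ and $\operatorname{sp}(Q|_{\ker P})<1$. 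The map $z\mapsto\widehat R(z)$ is analytic on $|z|<1$ and continuous up to $|z|=1$ (dominated convergence, using the operator tail bound $\sum_{k>n}\|R_k\|_{\mathcal L_{\theta,m}}=\mathcal O(n^{-b})$, which is the upgrade of the measure tail via bounded distortion); aperiodicity, $1\notin\operatorname{sp}\widehat R(z)$ for $|z|=1$, $z\ne1$, follows from the topological mixing of $T$ (equivalently the irreducibility of the Markov partition). Since $b>1$ for every $\gamma>1$, the mean return time $\mu:=\sum_n n\,m(\tau=n)=1/m(I_m)$ (Kac's formula) is finite, and we are exactly in the range to which Gou\"ezel's refinement of Sarig's theorem applies.

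Feeding this into the renewal theorem --- decompose $I-\widehat R(z)=(I-\widehat L)+(1-z)\sum_{k\ge0}z^k\sum_{j>k}R_j$, invert near $z=1$, extract Taylor coefficients --- yields
\begin{equation*}
T_n=\frac1\mu P+\frac1\mu\Big(\sum_{k=n+1}^{\infty}m(x\in I_m:\tau(x)>k)\Big)P+E_n,\qquad \|E_n\|_{\mathcal L_{\theta,m}}=\mathcal O\big(F_\gamma(n)\big),
\end{equation*}
the three regimes of $F_\gamma$ coming from the self-convolution asymptotics of the tail sequence $(m(\tau>k))_k$: for $\gamma<2$ (i.e.\ $b>2$) the operator $\sum_n nR_n$ converges and the remainder is $\mathcal O(n^{-b})$; at $\gamma=2$ ($b=2$) the borderline convolution produces the extra factor $\log n$; for $\gamma>2$ ($b<2$) the remainder is governed by $(m(\tau>\cdot))\ast(m(\tau>\cdot))$ and is $\mathcal O(n^{2-2b})=\mathcal O(n^{-2/(\gamma-1)})$. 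Pairing with $f$ and $g$, the term $\frac1\mu Pf$ contributes $\int f\,dm\int g\,dm$, which cancels in $\operatorname{Corr}(f,g\circ T^n)$, while the second term contributes $\big(\sum_{k>n}m(\tau>k)\big)\int f\,dm\int g\,dm$, and $\big|\int_{I_m}g\cdot(E_nf)\,dm\big|\le\|g\|_\infty\|E_nf\|_{L^1}\le CF_\gamma(n)\|g\|_\infty\|f\|_{\mathcal L_{\theta,m}}$; this is the displayed inequality. If moreover $\int f\,dm=0$ then $Pf=0$, so the first two terms vanish identically and only $E_nf$ remains; a closer look at Gou\"ezel's remainder shows that the part which is not sandwiched between projections $P$ --- the only one that does not vanish for mean-zero $f$ --- decays at rate $\sum_{k>n}\|R_k\|=\mathcal O(n^{-b})$ (the $Q$-component contracts geometrically within each renewal block and enters only through the tail), whence $\int(g\circ T^n)\,f\,dm=\mathcal O\big(n^{-\gamma/(\gamma-1)}\big)$. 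Finally, for the CLT we may assume $\int f\,dm=0$; then the last estimate gives $\sum_{n\ge0}\big|\int(f\circ T^n)f\,dm\big|<\infty$ since $\frac{\gamma}{\gamma-1}>1$, hence the Green--Kubo variance $\sigma^2=\int f^2\,dm+2\sum_{n\ge1}\int(f\circ T^n)f\,dm$ is finite and $\frac1{\sqrt n}\sum_{j=0}^{n-1}f\circ T^j\Rightarrow\mathcal N(0,\sigma^2)$ follows by the standard martingale--coboundary argument (Gordin's method), $\sigma^2=0$ iff $f$ is an $L^2(m)$-coboundary; for $\gamma<2$ this is already contained in~\cite{LSY}.

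The main obstacle is the faithful bookkeeping inside the operator renewal theorem: matching our data to Gou\"ezel's hypotheses --- in particular the aperiodicity and the local H\"older regularity of $\log D\widehat T$ recorded above --- and then tracking the remainder precisely enough to obtain the sharp three-regime rate $F_\gamma$ and, separately, the improved $n^{-\gamma/(\gamma-1)}$ bound for mean-zero observables. The spectral gap for $\widehat L$ (Lasota--Yorke from bounded distortion) and the reduction through the renewal convolution are routine, and the analytic renewal machinery itself is used as a black box.
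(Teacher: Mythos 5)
Your proposal follows essentially the same route as the paper: verify the hypotheses of the Sarig--Gou\"ezel operator renewal theorem (irreducibility/aperiodicity, local H\"older regularity of $\log D\widehat T$ via the distortion bound, the big image property, and the tail $m(\tau>n)=\mathcal O(n^{-\gamma/(\gamma-1)})$) and then read off the three-regime expansion; the paper does exactly this and treats the renewal machinery as a black box, while you additionally sketch its internals (spectral gap for $\widehat L$, the decomposition of $T_n$, and the convolution asymptotics), all of which is consistent and correctly matched to the exponents $F_\gamma(n)$.
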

 \begin{remark}
 (i) Since when $m\rightarrow \infty$, $I_m$ covers mod-$0$ all the interval $(-1,1)$ we get an optimal decay of correlations of order ${\cal O}(\frac{1}{n^{\frac{1}{\gamma-1}}})$ for all integrable smooth enough functions which vanish in a neighborhood of $1$. \\
 (ii) The last sentence about the existence of the central limit theorem will be also obtained, using a different technique, in Proposition 5, part 2, (a).
 \end{remark}

\section{ Limit theorems }

Let us recall the notion of stable law (see \cite{F,Gou}): a stable law is the limit of a rescaled i.i.d process. More precisely, the distribution of a random variable $X$ is said to be stable if there exist an i.i.d stochastic process $(X_i)_{i\in\mathbb{N}}$ and some constants $A_n\in\mathbb{R}$ and $B_n>0$ such that in distribution:
$$
\frac{1}{B_n}\big(\sum_{i=0}^{n-1}X_i-A_n\big) \longrightarrow X \, .
$$
The kind of laws we are interested in can be characterized by their \textit{index} $p\in(0,1)\cup (1,2)$, defined as followed:
$$m(X>t)=(c_1+o(1))t^{-p} \, ,~~~~~~~~~~~~m(X<-t)=(c_2+o(1))t^{-p} \, ,$$
where $c_1\geq 0$ and $c_2\geq 0$ are two constants such that $c_1+c_2>0$, and by other two parameters:\\
\begin{equation*}
c=\left\{ 
\begin{aligned}
& (c_1+c_2)\Gamma(1-p) \cos(\frac{p\pi}2)   & \quad \quad  p&\in (0,1)\cup (1,2) \\
&   \frac12 & \quad \quad p&=2    
\end{aligned}
\right. , \quad \quad \quad \beta=\frac{c_1-c_2}{c_1+c_2}\, .
\end{equation*}
We will denote by $X(p,c,\beta)$ the law whose characteristic function is
$$
E(e^{X(p,c,\beta)})=e^{-c|t|^p\big(1-i\beta \text{sgn}(t)\tan(\frac{p\pi}{2})\big)}\, .
$$
\vspace{0.1cm}

\begin{proposition} Let us denote $S_n\varphi=\sum_{k=0}^{n-1}\varphi\circ T^k$, where $\varphi$ is an $\nu$-H\"older observable, with $\int \varphi(x)\:dx=0$.
\begin{enumerate}
\item If $\gamma < 2$ then the Central Limit Theorem holds for any $\nu>0$. That is to say there exists a constant $\sigma^2$ such that $\cfrac{S_n\varphi}{\sqrt{n}}$ tends in distribution to $\mathcal{N}(0,\sigma^2)$.

\item If $\gamma >2$ then:
\begin{enumerate}
\item If $\varphi(1)=0$ and $\nu>\frac{1}{2}(\gamma-2)$ then the Central Limit Theorem still holds.
 Moreover $\sigma^2=0$  iff there exists a measurable function $\psi$ such that $\phi=\psi\circ T-\psi$
\item If $\varphi(1)\neq 0 $ then $\cfrac{S_n\varphi}{ n^{\frac{\gamma-1}{\gamma}}  }$ converges in distribution to the stable law $X\big(p ,c, \beta\big)$ with:
 \begin{eqnarray*} 
  p&=& \frac{\gamma}{\gamma-1}\\
 c&=& \frac{1}{2\gamma}\left(\frac{2\gamma\varphi(1)}{\gamma-1}\right)^{\frac{\gamma}{\gamma-1}}\Gamma(\frac1{(1-\gamma)})\cos(\frac{\pi\gamma}{2 (\gamma-1)})
 \\
 \beta&=&\textrm{sgn}\,\varphi(1)
\end{eqnarray*}
\end{enumerate}

\item If $\gamma =2$ then:
\begin{enumerate}
\item If $\varphi(1)=0$  then the Central Limit Theorem  holds.
\item If $\varphi(1)\neq 0$ then there exist a constant $b$ such that  $\cfrac{S_n\varphi}{\sqrt{n\log{n}}}$ tends in distribution to $\mathcal{N}(0,b)$.
\end{enumerate}

\end{enumerate}
\end{proposition}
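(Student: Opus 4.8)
The plan is to reduce all four regimes to a single limit theorem for the first return map $\widehat T$ to $I_0$ and then lift it back to $T$. As set up in the previous section, inducing on $I_0$ gives a uniformly expanding Markov map which, by the bounded distortion estimate of Section~2 in the form~\eqref{separation} together with the uniform expansion $|D\widehat T|\ge\beta>1$, is a Gibbs--Markov map with the big image property and whose return time $R:=\tau_{I_0}$ satisfies $m\{R>n\}\sim\frac1\gamma\big(\frac{2\gamma}{\gamma-1}\big)^{\frac{\gamma}{\gamma-1}}n^{-\frac{\gamma}{\gamma-1}}$. Writing $\widehat\varphi(x)=\sum_{k=0}^{R(x)-1}\varphi(T^kx)$ for the induced observable, and using that $R\in L^1(m)$ for every $\gamma>1$ (since $\frac{\gamma}{\gamma-1}>1$) and $\int\varphi\,dm=0$, a limit theorem $\frac1{B_N}\sum_{j<N}\widehat\varphi\circ\widehat T^{\,j}\to\mathcal L$ transfers --- after controlling the two incomplete excursions at the ends of the orbit and rescaling $B_N$ by the mean return time --- to the same type of law for $\frac1{B'_n}\sum_{k<n}\varphi\circ T^k$. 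This is the scheme Gou\"ezel uses for intermittent maps; I would follow~\cite{GO2,Gou}.

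Everything then rests on the tail of $\widehat\varphi$ on $(I_0,m)$, which is governed by a long excursion near the unique neutral fixed point $\{1\}=\{-1\}$: for $x\in Z_{0,p}$ the orbit $x,Tx,\dots,T^{p-1}x$ spends all but boundedly many steps near $\{1\}$, where, by Lemmas~\ref{L1}--\ref{L2}, its $k$-th point lies at distance of order $k^{-1/(\gamma-1)}$ from the fixed point. Hence if $\varphi(1)\neq0$ then $\widehat\varphi(x)=\varphi(1)R(x)+o(R(x))$ uniformly, so $\widehat\varphi$ and $\varphi(1)R$ have the same tail: index $p=\frac{\gamma}{\gamma-1}$, skewness $\beta=\operatorname{sgn}\varphi(1)$, and a one-sided scale read off from $m\{R>t/|\varphi(1)|\}$, hence proportional to $|\varphi(1)|^{\gamma/(\gamma-1)}$. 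When $\gamma>2$ this index is in $(1,2)$ and the Gibbs--Markov (Aaronson--Denker type) stable limit theorem, as used in~\cite{GO2}, gives $N^{-(\gamma-1)/\gamma}\sum_{j<N}\widehat\varphi\circ\widehat T^{\,j}\to X(p,\widetilde c,\beta)$; lifting, together with the fact that by Kac's formula the normalised mean return time equals $2\gamma$, yields case~(2)(b) with the stated $p$, $c$, $\beta$. When $\gamma=2$ the index equals $2$ but the truncated variance of $\widehat\varphi$ diverges logarithmically --- the borderline case of~\cite{GO2} --- producing the central limit theorem with the anomalous normalisation $\sqrt{n\log n}$, which is case~(3)(b). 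When $\gamma<2$ the index exceeds $2$, so $\widehat\varphi\in L^2$ and moreover $R\in L^2$, and the Gibbs--Markov CLT with a standard lifting gives the $\sqrt n$ central limit theorem for any $\nu>0$, independently of $\varphi(1)$.

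If instead $\varphi(1)=0$, $\nu$-H\"older continuity gives $|\varphi(T^kx)|\lesssim k^{-\nu/(\gamma-1)}$ along the excursion, hence $|\widehat\varphi|\lesssim p^{\,1-\nu/(\gamma-1)}$ on $Z_{0,p}$ when $\nu<\gamma-1$ (and $\widehat\varphi$ is bounded up to a logarithm otherwise); combined with $m(Z_{0,p})\approx p^{-1-\frac{\gamma}{\gamma-1}}$ one checks that $\int\widehat\varphi^{\,2}\,dm<\infty$ exactly when $1-\frac{\gamma}{\gamma-1}-\frac{2\nu}{\gamma-1}<-1$, i.e.\ precisely when $\nu>\frac12(\gamma-2)$. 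Under this hypothesis $\widehat\varphi\in L^2$, $R\in L^1$, and the Gibbs--Markov CLT plus the lifting of~\cite{GO2} give $S_n\varphi/\sqrt n\to\mathcal N(0,\sigma^2)$, which is case~(2)(a); since $\frac12(\gamma-2)\le0$ for $\gamma\le2$, this also covers the sub-cases $\varphi(1)=0$ of~(1) and~(3)(a), and with the previous paragraph, all of~(1). For the degeneracy, $\sigma^2=0$ forces $\widehat\varphi$ to be an $L^2$-coboundary for $\widehat T$ (Leonov's theorem for Gibbs--Markov maps); spreading the transfer function along the tower yields a measurable $\psi$ on $\mathbb T$ with $\varphi=\psi\circ T-\psi$, and the converse is immediate.

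The hard part will be the lifting step when $\gamma\ge2$: there $R\notin L^2$, so the most comfortable transfer theorems do not apply and one must show directly --- following the treatment of incomplete excursions in~\cite{GO2} --- that the Birkhoff sums over the first and last partial excursions are negligible at the scale $B'_n$; in the stable regime this is delicate precisely because a single excursion already dominates $S_n\varphi$. A second, more routine, point is verifying that $\widehat\varphi$ (and $\widehat\varphi-\varphi(1)R$) is locally H\"older with respect to the separation time, so that it is an admissible observable for the Gibbs--Markov limit theorems; this follows once more from~\eqref{separation} and $|D\widehat T|\ge\beta$. The rest --- pinning down the constants $c$ in~(2)(b) and $b$ in~(3)(b) from the tail asymptotics of $\widehat\varphi$ and the mean return time --- is bookkeeping.
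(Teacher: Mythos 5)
Your proposal is correct and follows essentially the same route as the paper: induce on $I_0$, use the distortion bound (\ref{separation}) and uniform expansion to see that $\hat T$ is Gibbs--Markov, compute the return-time tail $m\{\tau_{I_0}>n\}\sim Cn^{-\gamma/(\gamma-1)}$, check local H\"older regularity of $\varphi$ in the separation metric, verify $\varphi_{I_0}\in L^2$ exactly when $\nu>\frac12(\gamma-2)$, and then invoke Gou\"ezel's Theorem~1.3 to transfer the stable law/CLT back to $T$. The only (minor) divergence is in case~1, where the paper cites Young's tower theorem (summable decay of correlations implies the CLT) rather than your $R\in L^2$ Gibbs--Markov argument, and the paper leaves the lifting and the Aaronson--Denker machinery entirely to the citation of \cite{Gou}, whereas you spell them out.
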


\begin{proof}
~
\begin{enumerate}

\item  
As a by-product of the tower's theory we get the existence of the central limit theorem whenever the rate of decay of correlations is summable (\cite{LSY}, Th.~4); this happens in our case for $\gamma <2$. As usual we should avoid that $\phi$ is a co-boundary. 
\item \begin{enumerate}
 
\item We proceed as in~\cite{Gou} Th.~1.3 where this result was proven
 for the Pomeau-Manneville parabolic maps of the interval. We defer the reader to Gou\"ezel's paper for the preparatory theory; we only prove here the necessary conditions for its application. We induce again on ${I_0}$ and we put $\varphi_{I_0}(x):=\sum_{i=0}^{\tau_{I_0}-1}\varphi(T^ix)$. We need:
\begin{enumerate}
\item $\phi$ must be locally $\theta$-H\"older on $ I_0$ (resp.~$\mathbb{T}$), with $\theta<1$, which means that there exists a constant $C$ such that $|\phi(x)-\phi(y)|\le C\theta^{s(x,y)}$ $\forall x,y \in I_0$ (resp.$\mathbb{T}$) with $s(x,y)\ge 1$. We extend the separation time $s(x,y)$ to the ambient space as follows: if $x,y\in \mathbb{T}$, call $\hat x, \hat y$ their first returns to $ I_0$. Whenever $T^ix, T^iy$ stay in the same element of the Markov partition $\{I_m\}_{m\in\mathbb{Z}}
$ until the first return to $I_0$, we put $s(x,y)=s(\hat x, \hat y)+1$; otherwise $s(x,y)=0$.
\item $m\{x\in I_0; \tau_{I_0}(x) >n\}={\cal{O}}(1/n^{\eta+1})$, for some $\eta>1$
\item $\varphi_{I_0}\in \mathcal{L}^2\big(I_0\big)\, .$
\end{enumerate}
Recall that the induced map $\hat T$ on $I_0$ is uniformly expanding with factor $\beta>1$; 
therefore for any couple of points $x,y\in \mathbb{T}$ we have $|x-y|_{\mathbb{T}}\le B \beta^{-s(x,y)}$, where $B$ is a suitable constant and $|\cdot|_{\mathbb{T}}$ denotes the distance on the circle. Using the H\"older assumption on $\phi$ we get $
|\phi(x)-\phi(y)|\le D |x-y|_{\mathbb{T}}^{\nu}\le E \beta^{-\nu s(x,y)}$, which shows that $\phi$ is locally H\"older with $\theta=\beta ^{-\nu}<1$.

The quantity in the second item above is exactly  $(b_n,b_{-n})$ for which we obtained in the previous section a bound of  order 
  $n^{-(\frac{\gamma}{\gamma-1})}$. Hence $\eta=\gamma/(\gamma-1)-1$.

To prove the third item denote $C_{\varphi}=\int_{I_0}|\varphi(x)|^2dx$  we obtain:
\begin{align*}
\int_{I_0^+}|\varphi_{I_0^+}(x)|^2\:dx &= C_{\varphi}+\sum_{p=2}^{+\infty}\int_{Z_{0,p}}\Big|\sum_{i=0}^{p-1}\varphi(T^ix)\Big|^2dx\\
&\lesssim C_{\varphi}+2\sum_{p=2}^{+\infty}\int_{b_p}^{b_{p-1}}\Big|\sum_{i=0}^{p-1}|T^ix- 1|_{\mathbb{T}}^{\nu}\Big|^2dx\\&\lesssim C_{\varphi}+2\sum_{p=2}^{+\infty}\int_{b_p}^{b_{p-1}}\Big|\sum_{i=0}^{p-1}|a_{i}-1|^{\nu}\Big|^2dx\\
&\lesssim C_{\varphi}+2\sum_{p=2}^{+\infty} m(b_p-b_{p-1})p^{2(-\frac{\nu}{\gamma-1}+1)}dx\\
&\lesssim C_{\varphi}+2\sum_{p=2}^{+\infty} p^{-(\frac{\gamma}{\gamma-1}+1)} p^{2(-\frac{\nu}{\gamma-1}+1)}dx \, .
\end{align*}
Finally if $\frac{2(-\nu+\gamma-1)}{\gamma-1}-\frac{\gamma}{\gamma-1}-1<-1$ (i.e.\ 
$\nu>\frac{1}{2}(\gamma-2)$) then $\varphi_{I_0}\in \mathcal{L}^2\big(I_0\big)$.
\item  Using the fact that  
$$m[u>n \varphi(-1)] =m(b_n,b_{-n})\sim\frac{1}{2\gamma}\left(\frac{2\gamma}{\gamma-1}\right)^{\frac{\gamma}{\gamma-1}}\frac{1}{n^{\frac{\gamma}{\gamma-1}}}$$
  and the proof in 2.(a), the result easily follows along the same lines of the proof of Th. 1.3 in \cite{Gou}. 
\end{enumerate}
\item This could also be argued as in the Proof of Th. 1.3 in \cite{Gou}.
\end{enumerate}
\end{proof}

\noindent
{\em Large deviations}.\\
The knowledge of the measure of the tail for the first returns on the tower (in our case built over $I_{0}$), will allows us to apply the results of Melbourne and Nicol~\cite{MN} to get the large deviations property for H\"older observables. 
Applied to our framework, their theorem states that if $m(x; \tau_{I_{0}}>n)={\cal O}(n^{-(\zeta+1)})$, with $\zeta>0$, then for all observables $\phi:[-1,1]\rightarrow \mathbb{R}$ which are H\"older and which we take of zero mean, we have the large deviations bounds:
\begin{proposition}
If $\gamma<2$ then the map $T$ verifies the following large deviations bounds:\\
(I) $\forall \epsilon>0$ and $\delta>0$, there exists a constant $C\ge 1$ (depending on $\phi$) such that
$$
m\left( \left|\frac{1}{n}\sum_{j=0}^{n-1}\phi(T^j(x))\right|>\epsilon\right)\le C n^{-(\zeta-\delta)}.
$$
(II) For an open and dense set of H\"older observables $\phi$, and for all $\epsilon$ sufficiently small, we have
$$
m\left( \left|\frac{1}{n}\sum_{j=0}^{n-1}\phi(T^j(x))\right|>\epsilon\right)\ge  n^{-(\zeta-\delta)}
$$
for infinitely many $n$ and every $\delta>0$.
\end{proposition}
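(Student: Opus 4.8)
The plan is to obtain this as a direct application of the large deviations theorems of Melbourne and Nicol \cite{MN}, so that the work consists only in checking their hypotheses for the Young tower already at our disposal; no estimate beyond those of Sections 2 and 3 will be needed.

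First I would recall the structure in place. In Section 3 we built the tower $\Delta$ over $I_0$ with first-return time $\tau_{I_0}$ as roof and the countable partition $\{Z_{0,p}\}_{p\ge1}$ of the base, the base map $\hat T$ being the first-return map: it is uniformly expanding ($|D\hat T|\ge\beta>1$), it satisfies the bounded-distortion estimate of Section 2 and hence the separation-time Hölder bound (\ref{separation}), namely $|D\hat T(x)/D\hat T(y)|\le\exp[C\delta^{s(\hat T x,\hat T y)}]$ with $\delta=\beta^{-1}$; the images $\hat T(Z_{0,p})$ all equal $I_0$ mod $0$, so the big-image property is automatic; and Lebesgue measure $m$ is the finite, ergodic invariant measure playing the role of the physical measure. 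Thus the base is a mixing Gibbs--Markov map and $\Delta$ is a Young tower of exactly the kind treated in \cite{MN}.

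Next I would verify the tail condition. As computed in Section 3,
\[
m\{x\in I_0:\tau_{I_0}(x)>n\}=m(b_{-n},b_n)\sim\frac1\gamma\Big(\frac{2\gamma}{\gamma-1}\Big)^{\frac{\gamma}{\gamma-1}}\frac1{(n-1)^{\frac{\gamma}{\gamma-1}}},
\]
so $m\{\tau_{I_0}>n\}=\mathcal O(n^{-(\zeta+1)})$ with $\zeta=\tfrac1{\gamma-1}$; for $\gamma<2$ one has $\zeta>1$, in particular $\zeta>0$, and the tail decays faster than $n^{-2}$, which is precisely the regime in which \cite{MN} delivers its clean polynomial bounds. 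Then I would check that a zero-mean $\nu$-Hölder observable $\phi:[-1,1]\to\mathbb R$ is admissible: it is bounded, it is locally $\theta$-Hölder with $\theta=\beta^{-\nu}<1$ for the separation time (the computation already carried out for the CLT in Section 4, using that $\hat T$ expands by $\beta$), and its induced version $\varphi_{I_0}=\sum_{i=0}^{\tau_{I_0}-1}\phi\circ T^i$ satisfies $|\varphi_{I_0}|\le\|\phi\|_\infty\,\tau_{I_0}$, hence lies in every $L^q(I_0)$ with $q<\zeta+1$; for $\gamma<2$ the integrability threshold $\tfrac12(\gamma-2)$ met in Section 4 is negative, so no restriction on $\nu$, nor on the value $\phi(1)$, is required. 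With the hypotheses of \cite{MN} in place, their upper bound yields (I) and their lower bound (valid for an open dense set of Hölder observables) yields (II), in both cases with the stated rate $n^{-(\zeta-\delta)}$.

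The only genuinely delicate point is the bookkeeping of translating the abstract framework of \cite{MN} into ours — recognizing our first-return tower over $I_0$, with Lebesgue as reference measure, as an instance of their setup, and verifying that a generic Hölder observable on the circle lifts to an admissible tower observable with the right integrability of its induced sum. Everything else is a quotation, the exponent $\zeta=1/(\gamma-1)$ being supplied by the tail asymptotics of Section 3 and the Gibbs--Markov structure by the distortion bound of Section 2. I would also emphasise that the restriction $\gamma<2$ is exactly the summability condition $\zeta>1$ under which \cite{MN} applies; beyond it the polynomial large-deviation picture changes, in tandem with the passage from the central limit theorem to stable laws seen in the Limit Theorems above and in Proposition \ref{OBU}.
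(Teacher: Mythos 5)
Your proposal matches the paper's own treatment: the paper likewise obtains both bounds as a direct quotation of the Melbourne--Nicol theorem applied to the first-return tower over $I_0$, using the tail estimate $m\{x\in I_0:\tau_{I_0}(x)>n\}=\mathcal{O}\big(n^{-\frac{\gamma}{\gamma-1}}\big)$, i.e.\ $\zeta=\frac{1}{\gamma-1}$, with the hypothesis $\gamma<2$ corresponding exactly to the regime $\zeta>1$ required by \cite{MN} (the paper's subsequent remark invokes Melbourne's strengthening to cover all $\gamma>1$). Your additional verifications (Gibbs--Markov structure of $\hat T$ via the distortion bound, local H\"older lifting with $\theta=\beta^{-\nu}$, integrability of the induced observable) simply make explicit what the paper leaves implicit, so the two arguments are essentially identical.
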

\begin{remark}
The Melbourne and Nicol result has been recently strenghtened by Melbourne \cite{IM}; by adopting the same notation as above, he proved that 
whenever the observable $\phi$ is $L^{\infty}$ (with respect to the Lebesgue measure $m$), and $\zeta+1>0$, then for any $\epsilon$ there exists a constant $C_{\phi, \epsilon}$ such that
$$
m\left( \left|\frac{1}{n}\sum_{j=0}^{n-1}\phi(T^j(x))\right|>\epsilon\right)\le C_{\phi, \epsilon} n^{-\zeta}
$$
for all $n\ge 1$. Translated to our map, this means that {\em we have the large deviation property whenever $\gamma>1$}. Similar results have been obtained by Pollicot and Sharp \cite{PS} for the Pomeau-Manneville class of maps; hopefully they could be generalized in the presence of unbounded first derivaties.
\end{remark}
\section{ Recurrence}
{\em First returns}.\\
In the past ten years the statistics of first return and hitting times  have been widely used as  new and interesting tools to understand the recurrence behaviors in dynamical systems.  Surveys of the latest results and some 
historical background can be found in~\cite{HSV, HV, A}.

Take a ball $B_r(x)$ or radius $r$ around the point $x\in \mathbb{T}$ and consider the first return $\tau_{B_r(x)}(y)$ of the point $y\in B_r(x)$ into the ball. If we denote with $m_r$ the conditional measure to $B_r(x)$, we ask whether there exists the limit of the following distribution when $r\rightarrow 0$\footnote{We call it distribution with abuse of language; in probabilistic terminology we should rather take $1$  minus that quantity.}:
$$
F^e_r(t)= m_r\left(y\in B_r(x); \tau_{B_r(x)} m(B_r(x))>t\right).
$$

The distribution $F^h_r(t)$ for the first {\em hitting} time (into $B_r(x)$) is defined analogously just taking $y$   and the probability $m$ on the whole space $\mathbb{T}$. 

A powerful tool to investigate such distributions for non-uniformly expanding and hyperbolic systems is given by the conjunction of the following results, which   reduce the computations to induced subsets.
\begin{itemize}
\item Suppose $(T,X,\mu)$ is an ergodic measure preserving transformation of a smooth Riemannian manifold $X$; take $\hat X\subset X$ an open set and equip it with the first return map $\hat T$ and with the induced (ergodic) measure $\hat \mu$. For $x\in \hat X$ we consider the ball $B_r(x)$ ($B_r(x)\subset\hat X$)
 around it and we write $\hat \tau_{B_r(x)}(y)$ for the first return of the point $y\in B_r(x)$ under $\hat T$. We now consider the distribution of the first return time  for the two variables $ \tau_{B_r(x)}$ and $\hat \tau_{B_r(x)}$ in the respective probability spaces ($B_r(x), \mu_r$) and ($B_r(x), \hat \mu_r$) (where again the subindex $r$ means conditioning to the ball $B_r(x)$), as :  $F^e_r(t)= \mu_r(y\in B_r(x)); \tau_{B_r(x)}(y) \mu(B_r(x))>t)$ and $\hat F^e_r(t)= \hat \mu_r(y\in B_r(x));  \hat \tau_{B_r(x)}(y)\hat \mu(B_r(x))>t)$.\\

In \cite{BSTV} it is proved the following result:
suppose that for $\mu$-a.e. $x\in \hat X$ the distribution $\hat F^e_r(t)$  converges pointwise  to the continuous functions  $f^e(t)$  when $r\rightarrow 0$ (remember that  the previous distribution depend on $x$ {\em via} the location of the ball $B_r(x)$); then we have as well $F^e_r(t)\rightarrow f^e(t)$ and the convergence is uniform in $t$\footnote{The result proved in~\cite{BSTV} is slightly more general since it doesn't require the continuity of the asymptotic distributions over all $t\ge 0$. We should note instead that we could relax the assumption that $\hat X$ is open just removing from it a set of measure zero, which will happen on our induced sets $I_m$.}.
We should note that whenever we have the distribution $f^e(t)$ for the first return time we can insure the existence of the weak-limit distribution for the first hitting time $F^h_r(t)\rightarrow f^h(t)$ where
$f^h(t)=\int_0^t(1-f^e(s))ds,\; t\ge 0$~\cite{HLV}.

{\sc Note}: From now on we will say that we have $f^{e,k}(t)$ as  {\em limit distributions for balls}, if we get them in the limit $r\rightarrow 0$ and for $\mu$-almost all the centers $x$ of the balls $B_r(x)$.
\item The previous result is useful if we are able to handle with recurrence on induced subsets, see \cite{BV, BT} for a few applications. Induction for one-dimensional maps often produces piecewise monotonic maps with countably many pieces. An interesting class of such maps are the Rychlik's maps~\cite{Ri} : in  \cite{BSTV} Def.~3.1
the underlying measure is conformal. When the conformal measure is the Lebesgue measure $m$, then  Rychlik's maps could be characterized in the following way:

 Let $T:Y\rightarrow X$ be a continuous map, $Y\subset X$ open and dense, $m(Y)=1$ and $X$ is the 
 unit interval or the circle. Suppose there exists a countable family of pairwise disjoint open intervals 
 $Z_i$ such that $Y=\bigcup_{i\le 1}Z_i$ and $T$ is: (i) $C^2$ on each $Z_i$; (ii) uniformly expanding:
  $\inf_{Z_i}\inf_{x\in Z_i}|DT(x)|\ge \beta>1$; (iii) Var$(g)<\infty$, where $g=1/|DT(x)|$ 
  when $x\in Y$ and $0$ otherwise (Var~$g$ denotes the total variation of the function $g: \mathbb{R} \rightarrow\infty$).
  
  In~\cite{BSTV} Th.~3.2 it was shown that such maps have exponential return time statistics around balls 
  (i.e.~$f^e(t)=f^k(t)=e^{-t}$), whenever the invariant measure is absolutely continuous w.r.t. $m$  and moreover this invariant measure  is mixing.
\end{itemize}
  Before we formulate our next result for the maps $T$ investigated in this paper let us prove the 
  following lemma.

  \begin{lemma}
  The map $\hat T$ is Rychlik on the cylinders $I_m$, $m\in\mathbb{Z}$ and the
  variation of $|D\hat T|$ is finite on each of them.
  \end{lemma}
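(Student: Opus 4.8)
The plan is to verify the three defining properties of a Rychlik map for the first return map $\hat T:I_m\to I_m$ equipped with the partition $\{Z_{m,p}\}_{p\ge 1}$ introduced in Section~2, devoting the bulk of the work to the finiteness of $\mathrm{Var}(g)$, where $g:=1/|D\hat T|$ is extended by $0$ outside $I_m$ (this is the quantity entering Rychlik's condition~(iii)). The first two requirements are essentially already available: the $Z_{m,p}$ are pairwise disjoint open intervals covering $I_m$ mod~$0$; on each of them $\hat T=T^p$, and since the orbit $x,Tx,\dots,T^{p-1}x$ of a point $x\in Z^+_{m,p}$ stays away from $0$ and $1$ (it runs through $Z^+_{m,p}$ and the intervals $(a_{-(m+p-q)},a_{-(m+p-q-1)})$, $1\le q\le p-1$, and symmetrically for $Z^-_{m,p}$), every branch $\hat T|_{Z_{m,p}}$ is $C^2$; finally, uniform expansion $|D\hat T|\ge\beta>1$ with $\beta=\inf_{Z_{m,1}}|DT|$ is exactly the observation recorded just before the Bounded Distortion Proposition (footnote~1). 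So it remains to bound $\mathrm{Var}(g)$.

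I would split the total variation as
$$
\mathrm{Var}(g)\le\sum_{p\ge1}\Big(\mathrm{Var}_{Z^+_{m,p}}(g)+\mathrm{Var}_{Z^-_{m,p}}(g)\Big)+\sum_{p\ge1}\big(\text{jumps of }g\text{ at the endpoints of }Z^\pm_{m,p}\big)+\big(\text{jumps at }a_{\pm m}\big),
$$
the accumulation point $0$ contributing nothing because, as will be seen, $\|g\|_{L^\infty(Z_{m,p})}\to0$. Every endpoint jump is bounded by the sum of the $L^\infty$-norms of $g$ on the two cylinders meeting there, and on each open cylinder $g$ is $C^1$, so $\mathrm{Var}_{Z^\pm_{m,p}}(g)=\int_{Z^\pm_{m,p}}|g'|$; everything thus reduces to the two estimates (a)~$\sum_p\|g\|_{L^\infty(Z_{m,p})}<\infty$ and (b)~$\int_{Z^\pm_{m,p}}|g'|=O\big(\|g\|_{L^\infty(Z_{m,p})}\big)$. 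For~(a): by the Bounded Distortion Proposition $\inf_{Z^+_{m,p}}DT^p\approx\sup_{Z^+_{m,p}}DT^p$, and by the mean value theorem this order of magnitude equals $|T^p(Z^+_{m,p})|/|Z^+_{m,p}|$; here $|T^p(Z^+_{m,p})|$ is a fixed, positive, $m$-dependent fraction of $|I_m|$, while by Lemma~\ref{L2} one has $|Z^+_{m,p}|=|b_{m+p-1}|-|b_{m+p}|\approx(m+p)^{-\frac{2\gamma-1}{\gamma-1}}$, and since $\tfrac{2\gamma-1}{\gamma-1}=2+\tfrac1{\gamma-1}>2$ the series $\sum_p(m+p)^{-\frac{2\gamma-1}{\gamma-1}}$ converges. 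For~(b): from $|g'|\le\|g\|_{L^\infty(Z^+_{m,p})}\sum_{q=0}^{p-1}\big|\tfrac{D^2T}{DT}(T^qx)\big|\,DT^q(x)$ and the change of variables $u=T^qx$ in the $q$-th summand, $\int_{Z^+_{m,p}}|g'|\le\|g\|_{L^\infty(Z^+_{m,p})}\sum_{q=0}^{p-1}\int_{T^q(Z^+_{m,p})}\big|\tfrac{D^2T}{DT}\big|\,du$; the term $q=0$ equals $\int_{Z^+_{m,p}}|\tfrac{D^2T}{DT}|\approx\log\big(|b_{m+p-1}|/|b_{m+p}|\big)=O\big((m+p)^{-1}\big)$ by Lemmas~\ref{L1} and~\ref{L2}, and for $1\le q\le p-1$ the interval $T^q(Z^+_{m,p})=(a_{-(m+p-q)},a_{-(m+p-q-1)})$ satisfies $\int|\tfrac{D^2T}{DT}|\lesssim\big(\sup|D^2T|\big)\cdot\length[a_{m+p-q},a_{m+p-q-1}]=O\big((m+p-q)^{-2}\big)$ — this is exactly the estimate already carried out in the proof of Lemma~\ref{L3}, cf.~(\ref{d1}) and~(\ref{sd}) — so $\sum_{q\ge1}\int_{T^q(Z^+_{m,p})}|\tfrac{D^2T}{DT}|\le C_0\sum_{j\ge m}j^{-2}<\infty$ uniformly in $p$. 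Combining, (b) holds, and summing (b) against (a) gives $\mathrm{Var}(g)<\infty$; the cylinders $Z^-_{m,p}$ are dealt with identically using $T(-x)=-T(x)$.

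The only genuinely delicate point is the bookkeeping in step~(b): distributing the contribution of an orbit segment between the single passage near $0$ — the factor $q=0$, which is where the derivative is unbounded and which costs only a logarithmic factor — and the passages near $\pm1$ — the factors $q\ge1$, which carry the parabolic behaviour and whose total cost is the summable $\sum(m+p-q)^{-2}$ — all while keeping the constants independent of $p$. Since both contributions are already under control through the estimates of Section~2, the proof amounts to a repackaging of the Bounded Distortion Proposition rather than to a new argument.
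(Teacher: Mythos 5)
Your proof is correct and follows essentially the same route as the paper's: the variation of $1/|D\hat T|$ over $I_m$ is decomposed into the within-cylinder variations plus the jump terms at the endpoints of the $Z_{m,p}$, the jumps are controlled through $\sup_{Z_{m,p}}1/|D\hat T|\lesssim e^{2K}\,m(Z_{m,p})/m(a_{m-1},a_m)$ via the mean value theorem, the bounded distortion Proposition and the big-image property, and summability over $p$ concludes. The only divergence is in execution: where the paper bounds the within-cylinder term $\int_{Z_{m,p}}|D^2\hat T|/|D\hat T|^2$ directly by $e^{2K}\sup_{Z_{m,p}}1/|D\hat T|$ using the distortion bound together with the constancy of the sign of $D^2\hat T$ on each cylinder, you re-derive the same $O\big(\sup_{Z_{m,p}}1/|D\hat T|\big)$ bound (uniformly in $p$) by expanding the logarithmic derivative of $DT^p$ and reusing the Section~2 estimates, which is a repackaging of the distortion proof but has the mild advantage of not relying on that sign observation.
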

    \begin{proof} (see~\cite{BSTV}). Let us consider the cylinder $I_m$ and partition it 
    into the cylinders $Z_{m,p}$ with first return $p\ge 1$, as we did in the second section; then we have for 
    the variation on $I_m$
  $$
  \mbox{Var} \frac{1}{|D\hat T|}\le \sum_{Z_{m,p}}\int_{Z_{m,p}}\frac{| D^{2}\hat T(t)|}{|D\hat T(t)|^2}dt+2\sum_{Z_{m,p}} \sup_{Z_{m,p}}\frac{1}{|D\hat T|}\, .
  $$
  By the distortion bound proved in the second section we have that 
  $$
  e^{2K}\ge \left|\frac{D\hat T(x)}{D\hat T(y)}\right|\ge \left|\int_x^y \frac{D^{2}\hat T(t)}{D\hat T(t)}dt\right|\ge \int_x^y \frac{|D^{2}\hat T(t)|}{D\hat T(t)}\,dt
  $$
   for any $x,y\in Z_{m,p}$, since the first derivative is always positive and the second derivative has the same sign for all the points in the same cylinder. But this immediately implies that $\int_{Z_{m,p}}\frac{| D^{2}\hat T(t)|}{|D\hat T(t)|^2}\,dt\le\sup_{Z_{m,p}}\frac{1}{|D\hat T|}e^{2K}$. Using Remark(\ref{remark}) we can restrict to $Z_m^-$. Since $\hat T$ maps  $Z^-_{m,p>1}$ diffeomorphically onto $(a_{m-1},a_{m})$ and $Z^-_{m,1}$ onto $(a_{-(m-1)},a_m)\supset (a_{m-1},a_{m}) $ there will be a point $\xi$ for which $D\hat T(\xi) m( Z_{m,p} ) \ge m(a_{m-1},a_{m})$. Applying the bounded distortion estimate one more time, we get $\sup_{Z_{m,p}}\frac{1}{|D\hat T|}\le \frac{e^{2K} m(Z_{m,p}) }{ m(a_{m-1},a_{m})}$. We finally obtain
  $$
  \mbox{Var} \frac{1}{|D\hat T|}\le \frac{e^{2K}(2+e^{2K})}{m(a_{m-1},a_{m})}\sum_{Z_{m,p}} m( Z_{m,p}) < \infty \, .
  $$
  \end{proof}
  
  \noindent The following result now follows by~\cite{BSTV} Theorem~3.2.

  \begin{proposition}
  The map $T$ has exponential return and hitting time distributions with respect to the measure $m$ 
  provided  $\gamma>1$. 
  \end{proposition}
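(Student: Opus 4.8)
The plan is to deduce the statement from \cite{BSTV} Theorem~3.2 applied to the first return map on each cylinder $I_m$, where the Rychlik property has just been established, and then to pull the conclusion back to $(\mathbb{T},m)$ by means of the abstract reduction recalled at the beginning of this section, finally letting $m\to\infty$. Fix an index $m\ge0$ and consider $\hat T:I_m\to I_m$ with the probability measure $\hat m:=m(\cdot\,|\,I_m)$, the normalised restriction of Lebesgue. Since $m$ is $T$-invariant, $\hat m$ is $\hat T$-invariant by Kac's formula; it is trivially absolutely continuous with respect to $m$; and by the Lemma just proved $\hat T$ is Rychlik with $\mathrm{Var}(1/|D\hat T|)<\infty$. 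The one point still to be argued is that $\hat m$ is \emph{mixing} for $\hat T$, not merely ergodic. I would obtain this from exactness of $T$: the Lai--Sang Young tower of Section~3 sits over $I_0$ with first return time $\tau_{I_0}$ taking the value $p$ on $Z_{0,p}$ for every $p\ge1$, so the tower is aperiodic and $T$ is exact with respect to $m$; since the first return map of an exact system to a set of positive measure is again exact, $(\hat T,I_m,\hat m)$ is exact and in particular mixing. Equivalently, the mixing can be read off the Markov structure, the branch images of Section~2 showing that finitely many iterates of $\hat T$ spread any cylinder of the generating partition over the whole of $I_m$.

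With this in hand, \cite{BSTV} Theorem~3.2 applies to $(\hat T,I_m,\hat m)$ and gives, for $\hat m$-almost every centre $x\in I_m$, that the induced return and hitting time distributions around balls converge, as $r\to0$, to the exponential law $e^{-t}$, uniformly in $t\ge0$. Because this limit is continuous, the reduction of \cite{BSTV} stated in the first bullet of this section (applicable after deleting the null point $\{0\}$ from $I_m$) yields that the ambient distributions, i.e.\ those of the first return of $T$ to $B_r(x)$ computed in $(\mathbb{T},m)$, converge to the same exponential law for $m$-almost every $x\in I_m$, and by \cite{HLV} the first hitting time distribution does too. Finally $I_m=(a_{-m},a_m)\setminus\{0\}$ with $a_m\uparrow1$ and $a_{-m}\downarrow-1$, so $\bigcup_{m\ge0}I_m=\mathbb{T}$ up to a set of zero Lebesgue measure; hence for Lebesgue-almost every $x\in\mathbb{T}$ one fixes an index $m$ large enough that $x\in I_m$, and then $B_r(x)\subset I_m$ for all sufficiently small $r$, so the exponential return and hitting statistics hold at $x$. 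Nothing in this chain constrains $\gamma$ beyond $\gamma>1$, since $\gamma$ enters only through the constants in the distortion and variation estimates.

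I expect the only step requiring genuine care to be the mixing claim: \cite{BSTV} Theorem~3.2 really does need the induced absolutely continuous invariant measure to be mixing, while Kac's theorem hands us only ergodicity, so one must either quote (and cite correctly) the stability of exactness under inducing, or derive topological mixing of $\hat T$ on $I_m$ from the explicit branch images $\hat T(Z^{\pm}_{m,p})$ of Section~2 --- tracking how the two short branches on the extreme cylinders iterate any cylinder onto all of $I_m$. Everything else is a direct invocation of \cite{BSTV} and \cite{HLV} on top of the already proved Rychlik property.
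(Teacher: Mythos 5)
Your proposal follows essentially the same route as the paper: the Lemma giving the Rychlik property of $\hat T$ on each $I_m$ (with normalised Lebesgue as the induced absolutely continuous invariant measure), then \cite{BSTV} Theorem~3.2 for the induced system, the inducing reduction of \cite{BSTV} together with \cite{HLV} to transfer the exponential return and hitting statistics back to $(\mathbb{T},m)$, and finally exhaustion of the circle (mod $0$) by the cylinders $I_m$. The only genuine addition is your explicit verification that the induced measure is mixing, which the paper leaves implicit when invoking Theorem~3.2; your Markov-structure argument for this is sound (and safer than the exactness-inheritance shortcut), so the proof is correct.
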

 
   \noindent
{\em Number of visits.}\\
Let us come back to the general framework introduced in Sect. 5.1 with the two probability spaces $(X,T,\mu)$ and $(\hat X, \hat T, \hat \mu)$. We now introduce the random variables $\xi^e_r$ and $\hat\xi^e_r$ which count the number of visits of the orbits of a point $y\in B_r(x)$ to the ball itself and up to a certain rescaled time. Namely:
$$
\xi^e_r(x,t) \equiv \sum_{j=1}^{\left[ t\over\mu(B_r(x))\right]}\chi_{B_r(x)} \left( T^j(y) \right)\, ,
$$
where $\chi$ stands for the characteristic function and $x\in X$. If we take $x\in \hat X$ we can define in the same manner the variable $\hat \xi^e_r(x,t)$ by replacing the action of $T$ with that of $\hat T$. We now introduce the two distributions
$$
G^e_r(t,k) = \mu_r(x; \xi^e_r(x,t)=k), \ \hat G^e_r(t,k) = \hat \mu_r(x; \hat \xi^e_r(x,t)=k)\, ,
$$
where again the index $r$ for the measures means conditioning on $B_r(x)$. It is proved in \cite{BSTV} that whenever the distribution $\hat G^e_r(t,k)$ converges weakly (in $t$) to the function $g(t,k)$ and for almost all $x\in \hat X$, the same happens, with the same limit,  to the distribution $G^e_r(t,k)$. For systems with strong mixing properties the limit distribution is usually expected to be Poissonian~\cite{HSV, HV,HV2, A}: $\frac{t^k e^{-t}}{k!}$. 

In~\cite{FFT} it was shown that Rychlik maps enjoy Poisson statistics for the limit distribution of the variables $\xi^e_r$  and whenever the center of the ball is taken a.e.. Hence we get the following result.
\begin{proposition}
Let $\gamma>1$. Then for $m$-almost every $x$ the number of visits to the balls $B_r(x)$ converges to the Poissonian distribution as  $r\rightarrow 0$.
\end{proposition}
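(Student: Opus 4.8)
The plan is to run the same inducing-and-transfer scheme already used above to obtain exponential hitting time statistics, with the Rychlik-maps input theorem replaced by the Poissonian limit theorem of \cite{FFT}. Fix $m\in\mathbb{N}$ and induce on the cylinder $I_m=(a_{-m},a_m)/\{0\}$, equipped with the first return map $\hat T$ and with the normalized conditional measure $\hat m(\cdot)=m(\cdot\cap I_m)/m(I_m)$, which is equivalent to Lebesgue on $I_m$. By the Lemma above, $\hat T$ is a Rychlik map on $I_m$ with $\mathrm{Var}(1/|D\hat T|)<\infty$; moreover $\hat m$ is an absolutely continuous $\hat T$-invariant probability and it is mixing, exactly as was needed to invoke \cite{BSTV} Theorem~3.2 in the exponential-statistics Proposition above. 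Hence $\hat T$ falls within the scope of \cite{FFT}, which yields: for $\hat m$-almost every center $x\in I_m$, the distribution $\hat G^e_r(t,k)$ of the number of visits of the $\hat T$-orbit to $B_r(x)$ converges weakly in $t$, as $r\to 0$, to the Poisson law $\frac{t^k e^{-t}}{k!}$.

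Next I would transfer this back to the original dynamics. Since $I_m$ is open after removal of the single point $0$, which is a null set and, by the footnote to the relevant result of \cite{BSTV}, harmless, for $m$-almost every $x\in I_m$ and every sufficiently small $r$ the ball $B_r(x)$ lies inside $I_m$, and $\hat T$ is precisely the first return map of $T$ to $I_m$. The transfer result of \cite{BSTV} for the number-of-visits statistics then applies: weak convergence of $\hat G^e_r(t,k)$ to $g(t,k)=\frac{t^k e^{-t}}{k!}$ for $\hat m$-a.e.\ center yields weak convergence of $G^e_r(t,k)$ to the same limit for $m$-a.e.\ center $x\in I_m$.

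Finally, since $a_m\to 1$ and $a_{-m}\to -1$ as $m\to\infty$, the cylinders $I_m$ are nested and exhaust $\mathbb{T}$ up to a set of Lebesgue measure zero. Taking the countable union of the full-measure subsets of $I_m$ produced in the previous step gives a single subset of $\mathbb{T}$ of full $m$-measure on which the number of visits to $B_r(x)$ converges, as $r\to 0$, to the Poissonian distribution $\frac{t^k e^{-t}}{k!}$, which is the assertion.

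The step I expect to require the most care is checking that the induced system $(\hat T,I_m,\hat m)$ genuinely meets the standing hypotheses of \cite{FFT} — in particular mixing of the absolutely continuous invariant probability, and the fact that the countably many branches and the unbounded derivative of $T$ near $0$ do not push $\hat T$ outside the Rychlik class. Both are, however, already secured: uniform expansion and bounded distortion come from the Proposition of Section~2, and finiteness of $\mathrm{Var}(1/|D\hat T|)$ from the Lemma. Everything else reduces to a direct citation of \cite{FFT} and \cite{BSTV}.
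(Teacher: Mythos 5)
Your proposal is correct and follows essentially the same route as the paper: induce on the cylinders $I_m$, use the Lemma showing $\hat T$ is Rychlik with finite variation of $1/|D\hat T|$, invoke \cite{FFT} for Poissonian statistics of the induced number-of-visits variable, transfer back to $T$ via the \cite{BSTV} result for $G^e_r(t,k)$, and exhaust the circle mod $0$ by the $I_m$. The only point treated at the same (citation-level) degree of detail as in the paper is the mixing of the induced absolutely continuous invariant measure, which you flag appropriately.
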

\vspace{1cm}
\noindent
{\em Extreme Values}.\\
The last quoted paper \cite{FFT} contains another interesting application of the statistics of the first hitting time that we could apply to our map $T$ too. Let us first briefly recall the Extreme Value Theory. Given the probability measure preserving dynamical system $(X,T,\mu)$ and the observable $\phi: X\rightarrow \mathbb{R}\cap\{\pm\infty\}$, we consider the process $Y_n=\phi\circ T^n$ for $n\in \mathbb{N}$. Then we define the partial maximum $M_n\equiv \max\{Y_0,\cdots,Y_{n-1}\}$  and we look if there are normalising sequences $\{a_n\}_{n\in \mathbb{N}}\subset \mathbb{R}^+$ and $\{b_n\}_{n\in \mathbb{N}}\subset \mathbb{R}$ such that
$$
\mu(\{x: a_n(M_n-b_n)\le y\})\rightarrow H(y)
$$
for some non-degenerate distribution function $H$: in this case we will say that an {\em Extreme Value Law} (EVL) holds for $M_n$. If the  variables $Y_n$ were i.i.d., the classical extreme value theory prescribes the existence of only three types of non-degenerate asymptotic distributions for the maximum $M_n$ and under linear normalisation, namely:
\begin{itemize}
\item Type 1: $EV_1= e^{-e^{-y}}$ for $y\in \mathbb{R}$, which is called the {\em Gumbel} law.
\item Type 2: $EV_2= e^{-y^{-\alpha}}$ for $y>0$, $EV_2=0$, otherwise, where $\alpha>0$ is a parameter, which is called {\em Frechet} law.
\item Type 3: $EV_3= e^{-(-y)^{\alpha}}$ for $y\le 0$, $EV_3=1$, otherwise, where $\alpha>0$ is a parameter, which is called {\em Weibull} law.
\end{itemize}
From now on we will take $X$ as a Riemannian manifold with distance $d$ and $\mu$  an absolutely continuous (w.r.t. Lebesgue) probability invariant measure. Moreover consider the observable $\phi$ of the form $\phi(x)= g(d(x,\xi))$,  where $\xi$ is a chosen point in $X$. The function $g:[0, \infty)\rightarrow \mathbb{R}\cup \{+\infty\}$ is a strictly decreasing bijection in a neighborhood of $0$ and it has $0$ as a global maximum (eventually $+\infty$). The function $g$ could be taken in three classes; we defer to \cite{FFT} for the precise characterization. Important representatives of such classes (denoted by the indices {1,2,3}) are $g_1(x)= -\log(x)$; $g_2(x)=x^{-1/\alpha}$ for some $\alpha>0$; $g_3(x)=D-x^{-1/\alpha}$, for some $D\in \mathbb{R}$ and $\alpha>0$. We also remind the distribution of the first hitting time $F^h_r(t)$ into the ball $B_r(x)$ introduced above; we say that a system enjoys exponential hitting time statistics (EHTS) if $F^h_r(t)$ converges point wise to $e^{-t}$ for $\mu$-a.e. $x\in X$ (we saw before that it is equivalent to get the exponential limit distribution for the first return time). We are now ready to state the result in \cite{FFT} which establishes an equivalence between the EHTS and the EVL; we will be in particular concerned with the following implication: {\em suppose  the system $(X,T,\mu$) has EHTS; then it satisfies an EVL  for the partial maximum $M_n$ constructed on the process $\phi(x)= g(d(x,\xi))$, where $g$ is taken in  one of the three classes introduced above. In particular if $g=g_i$ we have an EVL for $M_n$ of type $EV_i$}.

 Of course this result can be immediately applied to the mapping $T$ under investigation in this paper.

\section{Generalizations}
 As mentioned in the Introduction the original paper by Grossmann and Horner~\cite{GH} dealt with 
different Lorenz-like maps $S$ which map $[-1,1]$ onto itself with two surjective symmetric branches 
 defined on the half intervals $[-1, 0]$ and $[0, 1]$. 
 They have the following local behaviour:
 \noindent
\begin{eqnarray*}
S(x)&\sim &1-b|x|^{\kappa}, \ x\approx 0, \ b>0\\
 S(x)&\sim &-x+a|x-1|^{\gamma}, \ x\approx 1_-, \ a>0\\
 S(x)&\sim &x+a|x+1|^{\gamma}, \ x\approx -1_+
\end{eqnarray*}
 where $\kappa\in(0,1)$ and $\gamma>1$ are two parameters.
We also require that\\ (i) in all points $x\neq -1, 1$ the absolute value of the derivative is strictly bigger than $1$.\\ (ii) $S$ is strictly increasing on $[-1,0]$, strictly decreasing on $[0,1]$ and convex on the two intervals $(-1,0), (0,1)$\\

 \noindent The map has a cusp at the origin where the 
 left and right first derivatives diverge to $\pm\infty$ and  the fixed point $-1$ is parabolic  (Fig.~2).
Although the map $S$ is Markov with respect to the partition $\{[-1, 0],[0, 1]\}$ it will be more convenient to
 use a countable Markov partition whose endpoints are given by suitable preimages of $0$ (see below). 
 
   The reflexion symmetry of the map $T$ in Sect. 2 was related to the invariance of the Lebesgue measure. We  do not really need that the map $S$ is symmetric with respect to the origin. We did this choice to get only two scaling exponents ($\kappa$ and $\gamma$) in $0$ and in $\pm1$. This implies in particular the same scalings for the preimages of $0$ on $(-1,0)$ and $(0,1)$. If the left and rigt branches are not anymore symmetric, still preserving the Markov structure and the presence of indifferent points and of a point with unbounded derivative,  one should play with at most four scaling exponents giving the local behavior of $S$ in $0$ and  $\pm 1$. 
 \begin{figure}[t]
\centerline{\epsfxsize=12.cm \epsfbox{lorenz-bn.eps}}
\end{figure}
\noindent 
\\    
\newline
We denote by $S_1$ (resp.~$S_2$) the restriction of $S$ to $[-1,0]$ (resp.~$[0,1]$) and define $a_{0+}=S^{-1}_20; \  a_{0-}=S^{-1}_10;  \ a_{-p}=S^{-p}_1a_{0-};  \ a_p=S^{-1}_2S^{-(p-1)}_1a_{0-}$
for $p=1,2,\dots$. It follows that $Sa_{-p}=Sa_p=a_{-(p-1)}$. In the same way as we did in the first section we define the sequence $b_p, p\ge 1$ as: $S b_{\pm p}=a_{p-1}$.  The countable Markov partition, $\mod m$, will be
 $\left\{ (a_{-p}, a_{-(p-1)}): p\ge1\right\}\cup\left\{(a_p, a_{p+1}):p\ge1\right\}\cup\{I_{0}\},I_0\equiv (a_{0-},a_{0+})/\{0\} $. 

\noindent From the local behaviors one gets the following scaling relations
\begin{eqnarray*}
 a_p=-a_{-p}&\sim&  1-\left( \frac1{a(\gamma-1)} \right)^{\frac1{\gamma-1}} \frac1{p^{\frac1{\gamma-1}}}  \\
a_{p}-a_{p+1}&\sim & a\left( \frac1{a(\gamma-1)} \right)^{\frac{\gamma}{\gamma-1}} \frac1{p^{\frac{\gamma}{\gamma-1}}}  \\
\\
\\
 b_p =-b_{-p}&\sim& \left( \frac1{ab^{(\gamma-1)}(\gamma-1)} \right)^{\frac1{k(\gamma-1)}} \frac1{p^{\frac1{k(\gamma-1)}}}   \\
b_{p}-b_{p-1}&\sim& \frac1{k \left( ab^{(\gamma-1)} \right)^{\frac1{k(\gamma-1)}}} \left(\frac1{\gamma-1} \right)^{\frac{k(\gamma-1)+1}{k(\gamma-1)}} \frac1{p^{\frac{k(\gamma-1)+1}{k(\gamma-1)}}}  
 \end{eqnarray*}

 \noindent
{\em Bounded distortion}.
 The distortion is estimated in  the same way as it was done in the proof of Proposition 1, with however two differences: 
 \begin{itemize}
 \item The role of Remark1 is played here by the monotonicity of the right branch: whenever $x,y$ sit on different components we can just note that $|DS(-x)|=|DS(x)|$  and that after one iteration $S(x)=S(-x)$. \footnote{In the asymmetric case $|DS(-x)| \ne |DS(x)|$ but still after one iterate $S(x)$ and $S(y)$ sit on the same side. This imply that  multiplying by the appropriate factor we can treat the asymmetric case in the same way as the symmetric one.}  
\item
Let us consider again the step from the first to the second upper bound in (\ref{B1}): we simply discarded the denominator given by the infimum of the first derivative over the sets with given first return time, since it was ininfluent for the map $T$. Instead it will now plays an important role since it makes bounded the following ratio since, as it is easy to check, :

$$
\frac{|b_{n+1}-b_n|\sup_{(b_{n+1},b_n)}|D^2S|}{\inf_{(b_{n+1},b_n)}|DS|}= {\cal O}(\frac1n)\, .
$$

\end{itemize}
\noindent
{\em Invariant measure and decay of correlations}. \  An important difference with the map on the circle is that we are not guaranteed that the Lebesgue measure $m$ is anymore  invariant; so we have to build an absolutely continuous invariant measure $\mu$. Fortunately the tower's techniques helps us again. If the tail of the return time on the base of the tower is $m$-summable and the distortion is bounded, it follows the existence of such $\mu$. To be more precise let us induce on the cylinder $I_{0}$.  A  subcylinder $Z_p$ of $I_{0}$ with first return time $p$ will have the form\footnote{We would like to note that, contrarily to the map $T$ investigated in the previous sections,  the first return map $\hat S$ for $S$ on $I_0$ is not onto $I_0$ on each cylinder $Z_p$ with prescribed first return time. In fact $\hat S$ maps all the cylinders $(b_{p-1}, b_p)$ and ($b_{-p}, b_{-(p-1)})$ onto $(a_{0-}, 0)$, but  it maps the cylinders $(a_{0-}, b_{-1})$ and $(b_1, a_{0+})$ onto $(0, a_{0-})$. Nevertheless $\hat S$ is an irreducible Markov map, as it is easy to check. If one wants a genuine first return Bernoulli map, one should induce over $(a_{0-}, 0)$: the cylinders with given first return time are simply slightly more complicated to manage with.}
\begin{eqnarray}\label{SI}
Z_1&=& (a_{0-},b_{-1})\cup(b_1,a_{0+})\\
Z_p&=&(b_{-(p-1)},b_{-p})\cup(b_{p},b_{p-1}) ~~~p>1\, .\nonumber
\end{eqnarray}

Consequently the Lebesgue measure of the points in $I_{0}$ with first return bigger than $n$ scales like
$$
m(x\in I_{0}; \tau_{I_{0}}(x)> n)\approx \frac{1}{n^{\frac{1}{\kappa(\gamma-1)}}}
$$
We can thus invoke Th.~1 in Lai-Sang Young's paper~\cite{LSY} to get :
\begin{proposition}
Let us consider the map $S$ depending upon the parameters $\gamma$ and $\kappa$. Then
for $0< \kappa< \frac{1}{\gamma-1}$  (or for $0< \kappa< 1$, when $\gamma \le 2$), we get the existence of an absolutely continuous invariant measure $\mu$ which mixes polynomially fast on H\"older observables with rate $\mathcal{O}\big(n^{-\frac{1-\kappa(\gamma-1)}{\kappa (\gamma-1)}}\big)$.\\
The map has exponential return and hitting times distributions and Poissonian statistic for the limit distribution of the number of visits in balls.
\end {proposition}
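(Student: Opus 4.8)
The plan is to rerun, for the interval maps $S$, the tower-based program of Sections 2--5 with the exponents adjusted. First I would record the bounded distortion estimate, which is the one already sketched in this section: one copies the proof of the bounded-distortion Proposition of Section 2, the only genuinely new feature being that the factor $\inf|DS|$ over the sets with prescribed first-return time, harmlessly discarded for $T$, must now be kept because both $|DS|$ and $|D^2S|$ blow up at the cusp $0$; one uses instead $|b_{n+1}-b_n|\,\sup_{(b_{n+1},b_n)}|D^2S|/\inf_{(b_{n+1},b_n)}|DS|=\mathcal{O}(1/n)$, whose summability over $n$ plays the role of the analogous series for $T$. Combined with the uniform expansion $|D\hat S|\ge\beta>1$, this yields a separation-time distortion bound of the form (\ref{s}), hence Rychlik-type control on the Jacobian of the induced map, exactly as in (\ref{separation}). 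I expect this distortion estimate near the cusp to be the only genuinely new computation; everything downstream is a matter of checking hypotheses.

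Next I would build the Lai-Sang Young tower over $I_0$ --- or, to obtain a full-branch first-return map, over $(a_{0-},0)$ as suggested in the footnote above --- with roof $\tau_{I_0}$ and base partition the cylinders $Z_p$ of (\ref{SI}). From the scaling relations for $b_{\pm p}$, the set $\bigcup_{p>n}Z_p$ is, mod $0$, the interval $(b_{-n},b_n)$, so $m(x\in I_0;\ \tau_{I_0}(x)>n)\approx|b_n|\approx n^{-1/(\kappa(\gamma-1))}$; this is summable exactly when $1/(\kappa(\gamma-1))>1$, i.e.\ when $\kappa<\tfrac{1}{\gamma-1}$, and automatically (as $\kappa\in(0,1)$) when $\gamma\le 2$ --- precisely the hypothesis of the Proposition. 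Since $Z_1$ carries return time $1$, the $\gcd$ of the return times is $1$ and the tower map is mixing. Theorem~1 of \cite{LSY} then produces a probability absolutely continuous invariant measure $\mu$, gives mixing of $\mu$, and yields $\big|\mathrm{Corr}(\varphi\circ S^n,\psi)\big|=\mathcal{O}\big(\sum_{k>n}m(\tau_{I_0}>k)\big)=\mathcal{O}\big(n^{-\frac{1-\kappa(\gamma-1)}{\kappa(\gamma-1)}}\big)$ for H\"older $\varphi$ and $\psi\in L^{\infty}$, which are the first two assertions.

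Finally, for the recurrence statements I would argue verbatim as in Section~5. One checks that the first-return map $\hat S$ restricted to each cylinder $I_m$ is Rychlik: $C^2$ on each subcylinder, uniformly expanding, and with $\mathrm{Var}(1/|D\hat S|)<\infty$ by the computation of the Rychlik lemma above, which uses only the bounded distortion estimate and the big-image property --- here the fact that the $\hat S$-images of the subcylinders are half-intervals of $I_0$, hence have Lebesgue measure bounded below (this is where the failure of $\hat S$ to be full-branched on $I_0$, noted in the footnote, does no harm, or one may instead induce over $(a_{0-},0)$). Since $\mu$ is absolutely continuous and mixing, Theorem~3.2 of \cite{BSTV} gives exponential return and hitting time distributions for balls centered $\mu$-a.e.\ inside $I_m$, and \cite{FFT} gives the Poissonian limit for the number of visits to such balls; the inducing theorem of \cite{BSTV} recalled in Section~5 then transports both conclusions to balls in all of $[-1,1]$ for $\mu$-a.e.\ center, since $I_m\uparrow(-1,1)$ mod $0$. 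The main obstacle, as flagged, is the bounded distortion near the cusp; once it is in hand the rest is verification of the hypotheses of the quoted theorems.
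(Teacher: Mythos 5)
Your proposal follows essentially the same route as the paper: adapt the bounded distortion proof by retaining the $\inf|DS|$ factor near the cusp (using $|b_{n+1}-b_n|\sup|D^2S|/\inf|DS|=\mathcal{O}(1/n)$), feed the tail estimate $m(\tau_{I_0}>n)\approx n^{-1/(\kappa(\gamma-1))}$ into Young's tower theorem to get the a.c.i.m.\ and the stated polynomial mixing rate, and then invoke the Rychlik property together with \cite{BSTV} and \cite{FFT} (inducing on $I_m$ and letting $I_m$ exhaust the interval) for the return/hitting time and Poisson statistics. Your extra remarks on aperiodicity and on inducing over $(a_{0-},0)$ to restore full branches correspond to the paper's own footnote, so no genuinely new ingredient or gap is involved.
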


{\em Optimal bounds.} 
As we did in the previous section the  result on the decay of correlations could be strengthened to produce a lower bound  for the decay of correlations 
 for integrable functions which vanish in a neighborhood of the indifferent point using  the renewal technique introduced in \cite{SA} and \cite{GO2}. The only difference with the  previous section is that now Lebesgue measure is not  invariant and thus we additionally need to show that the invariant density $\rho$ is Lipschitz in the region of inducing $I_m=(a_{-m},a_m)$. This is proved by first noting that the induced density $\widehat{\rho}$ is Lipschitz (see Eq.(\ref{mandrake}) below ) and then using the fact   that $\rho(x)=C_r \widehat{\rho}(x)$ for $x\in I_m$, that this is a direct consequence of Eq.(\ref{reldens}) below. 

Under these assumptions we get the analogous of Proposition \ref{OBU} above:
\begin{proposition}
 There exists a constant $C$ such that for all $f$ which are $\theta$-H\"older and $g$ integrable and both supported in $I_m$ we have
 $$
 \left| Corr(\:f ,g\circ T^n) -(\sum_{k=n+1}^{\infty}m(x\in I_m | \tau(x)>n))\int g\: dm\int f\:dm  \right|\le C F_{\gamma}(n)||g||_{\infty}||f||_{{\cal L}_{\theta,m}}
 $$
 where \\
 $F_{\gamma}(n)=
 \left\{ 
 \begin{aligned}
& n^{-\frac{1}{k(\gamma-1)}} & \hspace{0.3cm} \text{if}~~& 0 <\kappa<\frac{1}{2(\gamma-1)}\quad  (\text{or}\quad  0<\kappa<1, \quad\text{when} \quad  \gamma\le 3/2) \\
&(\log n)/n^2 & \hspace{0.6cm}  \text{if} ~~&\gamma=\frac1{2(\gamma-1)}\\
&n^{-\frac{2}{k(\gamma-1)}+2}& \hspace{0.3cm}  \text{if} ~~ &\frac1{2(\gamma-1)}<\kappa<\frac1{\gamma-1}\quad (\text{or}\quad \frac1{2(\gamma-1)}<\kappa<1, \text{when} \quad 3/2<\gamma\le 2)
\end{aligned}
\right. 
$\\ Moreover, if $\int f \:dm=0$, then $\int (g\circ T^n)\:f\:dm={\cal O}(\frac{1}{n^{\frac{1}{k(\gamma-1)}}})$. Finally the central limit theorem holds for the observable $f$. 
 \end{proposition}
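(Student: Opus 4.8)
The plan is to rerun the proof of Proposition~\ref{OBU} almost verbatim, inducing the first return map $\hat S$ on $I_m=(a_{-m},a_m)/\{0\}$ with the Markov partition $\{Z_p\}_{p\ge1}$ of (\ref{SI}), and to graft onto it the one extra ingredient forced by the fact that $m$ need not be $S$-invariant. Three of the four hypotheses of the renewal theory of Sarig~\cite{SA} and Gou\"ezel~\cite{GO2} go through as before. First, $\log D\hat S$ is locally $\theta$-H\"older with $\theta=\beta^{-1}$: this is the exact analogue of (\ref{separation}), obtained from the bounded distortion estimate for $S$ discussed above (where, unlike for $T$, one keeps the factor $\inf|DS|$ in the denominator and uses $|b_{n+1}-b_n|\sup_{(b_{n+1},b_n)}|D^2S|/\inf_{(b_{n+1},b_n)}|DS|={\cal O}(1/n)$). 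Second, the big image property holds because, by the description recalled in the footnote to (\ref{SI}), the images $\hat S(Z_p)$ take only the two values $(a_{0-},0)$ and $(0,a_{0+})$ (with an obvious change for $p=1$), which have length bounded below. Third, the scaling relations for $b_{\pm p}$ give $m(x\in I_m;\tau_{I_m}(x)>n)=m((b_{-(m+n)},b_{m+n}))\approx n^{-b}$ with $b=1/(\kappa(\gamma-1))$, so $b>1$ exactly under the standing hypothesis $0<\kappa<\frac1{\gamma-1}$, which is what Gou\"ezel's version of the theorem requires.

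The \emph{genuinely new} step is to upgrade all the transfer-operator statements from $m$ to the a.c.i.m.\ $\mu=\rho\,dm$, which means I must show that $\rho$ is Lipschitz on $I_m$. I would first establish that the density $\hat\rho$ of the $\hat S$-invariant probability on $I_m$ is bounded away from $0$ and $\infty$ and Lipschitz: since $\hat S$ is uniformly expanding with bounded distortion and the big image property, its transfer operator maps a suitable cone of Lipschitz densities comparable to a constant into itself, and iterating it produces a fixed point with
\begin{equation}\label{mandrake}
0<c_m\le\hat\rho\le C_m,\qquad |\hat\rho(x)-\hat\rho(y)|\le L_m\,|x-y|\qquad(x,y\in I_m).
\end{equation}
Then I would use the standard identification of the $S$-invariant measure built from the first return map, $\mu=\sum_{p\ge1}\sum_{j=0}^{p-1}(S^j)_*(\hat\mu|_{Z_p})$, whose terms with $j\ge1$ are supported off $I_m$; hence $\mu|_{I_m}=\hat\mu$ and therefore
\begin{equation}\label{reldens}
\rho(x)=C_r\,\hat\rho(x)\qquad(x\in I_m),
\end{equation}
with $C_r$ a normalising constant. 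Combining (\ref{reldens}) with (\ref{mandrake}) yields the Lipschitz regularity of $\rho$ on $I_m$ that the renewal scheme needs.

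With this in place I would invoke the operator renewal theorem of Sarig and Gou\"ezel, in the version that only asks the induced map to be irreducible (which $\hat S$ is, see the footnote to (\ref{SI})) rather than full-branched, to get the asymptotic expansion of the iterated transfer operator of $S$ and, testing it against $f\in{\cal L}_{\theta,m}$ and $g\in L^\infty$ supported in $I_m$, the claimed estimate. The three regimes of $F_\gamma(n)$ are then just the renewal remainder $\max(n^{-b},n^{-2(b-1)})$ for a tail of index $b$: it equals $n^{-b}=n^{-1/(\kappa(\gamma-1))}$ when $b>2$, i.e.\ $\kappa<\frac1{2(\gamma-1)}$; it picks up the crossover correction $(\log n)/n^2$ at $b=2$; and it equals $n^{-2(b-1)}=n^{-(2/(\kappa(\gamma-1))-2)}$ when $1<b<2$. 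When $\int f\,dm=0$ all the slow terms, which carry the factor $\int f\,dm$, drop out and one is left with the faster remainder $\int(g\circ S^n)f\,dm={\cal O}(n^{-1/(\kappa(\gamma-1))})$; this exponent exceeds $1$ under $\kappa<\frac1{\gamma-1}$, so the correlation decay is summable and the central limit theorem for $f$ follows by the martingale argument exactly as in Proposition~\ref{OBU}.

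The hard part is the control of the invariant density: establishing (\ref{mandrake}) carefully (positivity, boundedness, and the Lipschitz bound on $I_m$, with constants allowed to depend on $m$) together with the identity (\ref{reldens}). A secondary technical point, absent for the circle map $T$, is that one must use the renewal theorem in its irreducible-Markov form, since $\hat S$ fails to be onto $I_m$ on each cylinder $Z_p$; one should check that the irreducibility recalled after (\ref{SI}) is precisely what that version of the theorem demands.
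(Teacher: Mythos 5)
Your proposal follows essentially the same route as the paper's: rerun the Sarig--Gou\"ezel renewal argument of Proposition~\ref{OBU} for the map induced on $I_m$ (locally H\"older Jacobian from the distortion estimate for $S$, big images, irreducible Markov structure, tail of index $1/(\kappa(\gamma-1))>1$), and add the one new ingredient --- regularity of the invariant density on the inducing region --- via the induced density $\hat\rho$ together with the identity $\rho=C_r\hat\rho$ on $I_m$ coming from the first-return reconstruction of $\mu$, which is exactly how the paper argues through (\ref{mandrake}) and (\ref{reldens}). The only cosmetic difference is that you derive the boundedness and regularity of $\hat\rho$ by a cone/transfer-operator argument, whereas the paper simply quotes Th.~1 of \cite{LSY} (or, alternatively, the Rychlik property and \cite{KO}).
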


 \begin{remark}
 (i) Since when $m\rightarrow \infty$, $I_m$ covers mod-$0$ all the interval $(-1,1)$ we get an optimal decay of correlations of order ${\cal O}(n^{-\frac{1-k(\gamma-1)}{k(\gamma-1)}})$ for all integrable smooth enough functions which vanish in a neighborhood of $-1$ and of $1$. \\
 (ii) The last sentence about the existence of the central limit theorem will be also obtained  in Proposition \ref{LimitLorenz}, part 2, (a).
 \end{remark}

\noindent {\em Limit theorems}\ Following the corresponding arguments in  section 3 we have
\begin{proposition}\label{LimitLorenz} Let us denote $S_n\varphi=\sum_{k=0}^{n-1}\varphi\circ T^k$, where $\varphi$ is an $\nu$-H\"older observable, with $\int \varphi(x)\:dx=0$.
\begin{enumerate}
\item If $0<\kappa<\frac{1}{2(\gamma-1)}$ (or   $0<\kappa<1$, when $\gamma\le 3/2$), then the Central Limit Theorem holds for any $\nu>0$, nameky there exists a constant $\sigma^2$ such that $\cfrac{S_n\varphi}{\sqrt{n}}$ tends in distribution to $\mathcal{N}(0,\sigma^2)$.
\item If $\frac1{2(\gamma-1)}<\kappa<\frac1{\gamma-1}$ (or $\frac1{2(\gamma-1)}<\kappa<1$, when $3/2<\gamma\le 2$), then:
\begin{enumerate}
\item If $\varphi(-1)=0$ and $\nu>\frac{1}{2\kappa(\gamma-1)} $ then the Central Limit Theorem still holds.
 Moreover $\sigma^2=0$  iff there exists a measurable function $\psi$ such that $\phi=\psi\circ T-\psi$
\item If $\varphi(-1)\neq 0 $ then $\cfrac{S_n\varphi}{ n^{\frac1p}  }$ converges in distribution to the stable law $X\big(p ,c, \beta\big)$ with:
 \begin{eqnarray*} 
  p&=& \frac{1}{\kappa(\gamma-1)} \\
 c&=&  \rho(0)\left( \frac{\varphi(-1)}{ab^{(\gamma-1)}(\gamma-1)} \right)^{\frac1{k(\gamma-1)}}  
 \Gamma(1-p)\cos(\frac{\pi p}{2}) \\
 \beta&=&\textrm{sgn}\varphi(-1)
\end{eqnarray*}
where the density in $0$, $\rho(0)$, is always of order $1$ (see next section).
\end{enumerate}
\item If $k=\frac1{2(\gamma-1)}$ then:
\begin{enumerate}
\item If $\varphi(-1)=0$  then the Central Limit Theorem  holds.
\item If $\varphi(-1)\neq 0$ then there exist a constant $b$ such that  $\cfrac{S_n\varphi}{\sqrt{n\log{n}}}$ tends in distribution to $\mathcal{N}(0,b)$.
\end{enumerate}
\end{enumerate}
\end{proposition}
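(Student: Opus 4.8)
The plan is to follow, step by step, the proof of Proposition~5, reducing every item to Gou\"ezel's Theorem~1.3 of \cite{Gou} by inducing on $I_0=(a_{0-},a_{0+})/\{0\}$ and feeding in the scaling relations for $a_p,b_p$ listed above together with the bounded distortion estimate of this section. The one genuinely new feature, relative to the circle map, is that Lebesgue measure $m$ need not be invariant for $S$, so the a.c.i.m.\ $\mu$ and its density $\rho$ enter the statements; in particular $\rho(0)$ appears in the scaling constant $c$ of part~2(b). Hence one extra input is needed: that $\rho$ is Lipschitz on the inducing region and that $\rho(0)$ is a finite, strictly positive $\mathcal{O}(1)$ quantity. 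This is exactly what the paragraph on optimal bounds above provides (via $\rho=C_r\widehat\rho$ on $I_m$ with $\widehat\rho$ Lipschitz, and as discussed further in the next section).

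\emph{Part 1 and the coboundary clause of 2(a).} For $0<\kappa<\tfrac1{2(\gamma-1)}$ (or $0<\kappa<1$ when $\gamma\le3/2$) the return-time tail on $I_0$ is $m\{x\in I_0:\tau_{I_0}(x)>n\}\approx n^{-1/(\kappa(\gamma-1))}$, so the rate of decay of correlations is summable and the CLT is a by-product of the tower construction exactly as in \cite{LSY}, Th.~4; as always $\sigma^2=0$ iff $\varphi$ is an $L^2$-coboundary $\psi\circ S-\psi$.

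\emph{Part 2(a).} I would check the three hypotheses of \cite{Gou}, Th.~1.3, for the induced system on $I_0$: (i) $\varphi$ is locally $\theta$-H\"older for the extended separation time, which follows from $|D\widehat S|\ge\beta>1$ and the distortion bound exactly as in~(\ref{separation}), with $\theta=\beta^{-\nu}$; (ii) the polynomial tail $m\{\tau_{I_0}>n\}=\mathcal{O}(n^{-(\eta+1)})$ holds with $\eta=\tfrac1{\kappa(\gamma-1)}-1>1$ in the stated parameter window; (iii) $\varphi_{I_0}\in L^2(I_0)$. For (iii), the first-return orbit on $Z_p$ spends $\asymp p$ iterates near the parabolic point $-1$, at positions $\approx a_{-j}$ with $|a_{-j}+1|\approx j^{-1/(\gamma-1)}$, so from $\varphi(-1)=0$ and the $\nu$-H\"older bound one gets $|\varphi_{I_0}|\lesssim\sum_{j\le p}j^{-\nu/(\gamma-1)}\lesssim p^{\,1-\nu/(\gamma-1)}$; summing $m(Z_p)\,|\varphi_{I_0}|^2$ against $m(Z_p)\approx p^{-1-1/(\kappa(\gamma-1))}$ yields a convergent series for $\nu$ in the range stated in~2(a), and Gou\"ezel's theorem then gives the CLT.

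\emph{Parts 2(b) and 3.} When $\varphi(-1)\ne0$, the dominant part of $\varphi_{I_0}$ on $Z_p$ is $\approx p\,\varphi(-1)$, since the long excursions linger near $-1$; hence the tail of $\varphi_{I_0}$ is governed by $m\!\left(\bigcup_{p>n}Z_p\right)=m(b_{-n},b_n)$, and the precise asymptotics of $b_{\pm n}$ give a stable limit of index $p=\tfrac1{\kappa(\gamma-1)}\in(1,2)$ with skewness $\beta=\mathrm{sgn}\,\varphi(-1)$, the constant $c$ being read off from those asymptotics together with the density factor $\rho(0)$ picked up in the un-inducing step of \cite{Gou}; the normalisation is the $n^{1/p}$ dictated by that index. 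The marginal case $\kappa=\tfrac1{2(\gamma-1)}$, i.e.\ $p=2$, is treated as in \cite{Gou}: $\varphi_{I_0}$ is then just outside $L^2$, its truncated variance grows like $\log n$, and $S_n\varphi/\sqrt{n\log n}\to\mathcal{N}(0,b)$. The main obstacle is the density control announced in the first paragraph --- checking that $\rho$ is Lipschitz on $I_m$, that $\rho(0)$ is finite, positive and $\mathcal{O}(1)$, and that it enters $c$ with the stated weight --- since this is the ingredient that comes for free in the circle case but not here; the rest is a bookkeeping translation of the exponents from $\gamma$ to the pair $(\kappa,\gamma)$.
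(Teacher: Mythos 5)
Your overall route is exactly the paper's: the paper proves this proposition by a one-line reference to the arguments of Section 3 (Proposition 5), i.e.\ inducing on $I_0$, checking Gou\"ezel's three hypotheses with the new scalings of $a_p,b_p$, and importing the density facts ($\rho=C_r\hat\rho$ on the inducing region, $\rho(0)$ of order one) from the surrounding paragraphs --- the two ingredients you single out. Parts 1, 2(b), 3 and the role of $\rho(0)$ in the constant $c$ are handled as the paper intends.

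There is, however, one step that does not close as written. In 2(a) your own estimates give $|\varphi_{I_0}|\lesssim p^{\,1-\nu/(\gamma-1)}$ on $Z_p$ and $m(Z_p)\approx p^{-1-\frac{1}{\kappa(\gamma-1)}}$, so the series $\sum_p m(Z_p)\,|\varphi_{I_0}|^2$ converges iff $2\bigl(1-\tfrac{\nu}{\gamma-1}\bigr)<\tfrac{1}{\kappa(\gamma-1)}$, i.e.\ iff $\nu>(\gamma-1)-\tfrac{1}{2\kappa}$. This is the faithful analogue of the circle computation (which there yields $\nu>\tfrac12(\gamma-2)$), but it is \emph{not} the range $\nu>\tfrac{1}{2\kappa(\gamma-1)}$ appearing in 2(a): the two thresholds agree only when $\kappa=\tfrac{\gamma}{2(\gamma-1)^2}$, and for instance with $\gamma=3$, $\kappa=0.45$, $\nu=0.7$ the stated condition holds while your series diverges (the summand decays like $p^{-0.81}$). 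So the assertion that the computation ``yields a convergent series for $\nu$ in the range stated in 2(a)'' is unjustified; you must either prove the CLT under the threshold your estimate actually delivers, $\nu>(\gamma-1)-\tfrac{1}{2\kappa}$ (which is what ``following the corresponding arguments'' gives, and suggests the constant in the statement should be read this way), or supply a sharper bound on $\varphi_{I_0}$ showing the weaker stated condition suffices --- the paper itself offers no argument for the stated exponent.
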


\noindent {\em Large deviations.}  Large deviations results can be derived following the corresponding arguments in previous sections. In particular, and by using the recent result by Melbourne \cite{IM}, we can state that for (Lebesgue) $L^{\infty}$ observables, the large deviation property holds with polynomial decay at a rate which is given by that of the decay of correlations; for our Lorenz maps it is of order 
$n^{-\frac{1-\kappa(\gamma-1)}{\kappa (\gamma-1)}}$, provided that $0< \kappa< \frac{1}{\gamma-1}$  (or  $0< \kappa< 1$, when $\gamma \le 2$).
\newline

\noindent {\em Densities.}  A heuristic analysis of the density $\rho$ of the measure $\mu$ was done in~\cite{GH}. 
According to Th.~1 in~\cite{LSY} the induced map $\hat S$ has a  density   $\hat \rho$ bounded away from $0$ and $\infty$ which additionally  verifies, for any two points $x,y$ in a cylinder with given first return time:
\begin{equation}\label{mandrake}
\left|\frac{\hat \rho(x)}{\hat \rho(y)}-1\right|\le C \beta^{s(x,y)}
\end{equation}
where $C>0$ depends on the map and $\beta<1$ and $s(\cdot,\cdot)$ are as in Sect.~3 (separation times). 
Note  that we could get the same result by observing that our induced maps are Rychlik 
(which was proved in Sect.~5), and for such maps Kowalski~\cite{KO} showed that the density 
is of bounded variation and bounded away from zero on the support of the invariant measure. 
What is instead the behavior of $\rho$. Is $\rho$ bounded from below away from $0$ too? Since we are working with the induced map, it is well known how to reconstruct the invariant measure $\mu$ if we are able to control the subset on the induced space with given first return. By applying this formula to our induced space $I_{0}$ we get:
\begin{equation}\label{reldens}
\mu(B)=C_r \sum_i \sum_{j=0}^{\tau_i-1} \hat \mu (S^{-j}(B)\cap Z_i)
\end{equation}
where $B$ is any Borel set in $[-1,1]$, $\hat \mu$ is the $\hat S$-invariant absolutely continuous measure on $I_{0}$ and the first sum runs over the cylinders $Z_i$ with prescribed first return time $\tau_i$ and whose union gives $I_{0}$. The normalising constant $C_r=\mu(I_0)$ satisfies $1=C_r \sum_i \tau_i \hat \mu(Z_i)$.
Since, as we said above, $\hat \mu$ is uniformly equivalent to $m$ on $I_{0}$, we will use the latter measure in the next computations. Notice that the  terms in the sum defining $C_r$ scale as 
$ \mathcal{O}(n^{-\frac1{\kappa(\gamma-1)}})$.

To obtain the asymptotics of the density in the vicinity of the (interesting) points $\pm 1$ and $0$ we
proceed as follows. 
We first note that in order to estimate the $\mu$-measure of of a set $B$ we need to consider only the cylinders $Z_p$ of $I_{0}$  which iterates  will have non-empty intersection  with B before they return to $I_0$. This immediately implies that
$\mu(B)=C_r\widehat{\mu}(B)$ if $B\subset I_0$. It follows that
$$
\mu\left( (b_{\pm(n+1)},b_{\pm n}) \right) \approx C_r m\left( (b_{\pm(n+1)},b_{\pm n}) \right) 
$$
 and thus the density $\rho(x)$, $x$ close to $0$,  is of order $1$. 
In  a similar way  we estimate the $\mu$-measure of the cylinder $(a_{n-1}, a_n)$ (for big $n$)
near the point $1$; 
we get that
  $S^{-(1)}(a_{n-1},a_n)\cap Z_{n+1}=Z_{n+1}$  
 is   the only
  possible non-empty intersection of the preimage 
 $S^{-j}(a_{n-1}, a_n)$ with $Z_p$,  for every $p$ 
 and for $0\le j\le p-1$.
Therefore we get:
$$
\mu((a_{n-1}, a_n))\approx C_r m(Z_{n+1})
\approx n^{-\frac{1-\kappa+\kappa\gamma}{\kappa(\gamma-1)}}
$$
The density on $(a_{n-1}, a_n)$ is given by $\rho((a_{n-1}, a_n))\approx \frac{\mu\left( (a_{n-1},a_n) \right)}{m\left( (a_{n-1},a_n) \right)}\approx n^{-\frac{1-\kappa}{\kappa(\gamma-1)}}$.\\
We now study the density in the neighborhood of $-1$, by considering the cylinder $(a_{-n}, a_{-n+1})$, for large $n>0$.
The  cylinders $Z_p$ of $I_{0}$  whose iterates  will have non-empty intersection  with $(a_{-n},a_{-n+1})$ before they return to $I_0$,  have $ p \ge n+2$.
Therefore we get in the usual way:
$$
\mu((a_{-n}, a_{-n+1}))\approx C_r\sum_{p=n+2}^{\infty} m(Z_{p}))
\approx n^{-\frac{1}{\kappa(\gamma-1)}}
$$
The density in $(a_{-n}, a_{-n+1})$ is given by $\rho((a_{-n}, a_{-n+1}))
\approx \frac{\mu\left( (a_{-n}, a_{-n+1}) \right)}{m\left( (a_{-n}, a_{-n+1}) \right)}
\approx n^{-\frac{1-\kappa\gamma}{\kappa(\gamma-1)}}$.
\newline
\newline
 Let us summarize these facts.
\begin{proposition}
Let us consider the map $S$ with $\gamma>1$ and $0< \kappa\ < \frac{1}{\gamma-1}$ (or $0< \kappa\ < 1$ when $\gamma\le 2$). We have
\begin{itemize}
\item When $x\rightarrow 1$ the density $\rho\equiv \rho(x)\rightarrow 0$ 
\item When $x\rightarrow -1$ the density  verifies:\\
(i) if $\kappa=\frac{1}{\gamma}$ then $\rho=\mathcal{O}(1)$\\
(ii) if $\frac{1}{\gamma}<\kappa$, then $\rho \rightarrow \infty$\\
(iii) if $\frac{1}{\gamma}>\kappa$, then $\rho \rightarrow 0$
\item The density is always of order $1$ in the neighborhood of $0$.
\end{itemize}
\end{proposition}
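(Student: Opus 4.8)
The statement is a summary of the cylinder-by-cylinder density estimates obtained immediately above, so the plan is: (a) record, for a point $x$ lying in one of the Markov cylinders accumulating at $1$, at $-1$, or at $0$, the \emph{average} density $\mu(\,\cdot\,)/m(\,\cdot\,)$ on that cylinder, via the reconstruction formula~(\ref{reldens}) together with the scaling relations for $a_{\pm p},b_{\pm p}$; (b) upgrade these cylinder averages to pointwise asymptotics by bounding the oscillation of $\rho$ inside each cylinder; (c) compare the exponents so obtained with the standing hypothesis $0<\kappa<\frac1{\gamma-1}$. Step (a) has essentially been carried out: near $1$, the only non-empty contribution to~(\ref{reldens}) for $B=(a_{n-1},a_n)$ comes from the single cylinder $Z_{n+1}$, giving $\mu((a_{n-1},a_n))/m((a_{n-1},a_n))\approx n^{-\frac{1-\kappa}{\kappa(\gamma-1)}}$; near $-1$ the cylinders meeting $(a_{-n},a_{-n+1})$ before returning to $I_0$ are the $Z_p$ with $p\ge n+2$, whose Lebesgue measures are summable, so the average density is $\approx n^{-\frac{1-\kappa\gamma}{\kappa(\gamma-1)}}$; near $0$, every such cylinder lies in $I_0$, where by~(\ref{reldens}) $\rho=C_r\widehat\rho$ is bounded between two positive constants. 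I would simply collect these three facts.

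The only point that needs a real argument is step (b). On each of the cylinders in question $\rho$ is a \emph{finite} sum of non-negative functions of the form $C_r\,(\widehat\rho\circ\psi)\,|D\psi|$, with $\psi$ an inverse branch of an iterate of $S$; by the bounded distortion bound proved above for $S$ applied to that iterate, together with the H\"older control~(\ref{mandrake}) of $\widehat\rho$, each summand has distortion bounded on the cylinder by a constant independent of the cylinder, and a finite sum of non-negative functions with uniformly bounded distortion again has bounded distortion, since $\sup\sum_k f_k\le\sum_k\sup f_k\le C\sum_k\inf f_k\le C\inf\sum_k f_k$. Hence $\sup_C\rho\le\mathrm{const}\cdot\inf_C\rho$ on each cylinder $C$, so $\rho(x)$ is comparable, uniformly in the cylinder, to the average computed in (a); since these cylinders form a neighbourhood basis of $1$, $-1$ and $0$, the pointwise asymptotics follow. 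Alternatively one may invoke that $\rho$ is of bounded variation (Kowalski~\cite{KO}, the induced maps being Rychlik, cf.\ Section~5), so that $\osc_C\rho$ is summable over the cylinders and hence tends to $0$ along the accumulation; this still forces $\rho$ to follow the averages even in the regime where they diverge.

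Step (c) is then immediate: as $x\to1$ one has $n\to\infty$ with exponent $-\frac{1-\kappa}{\kappa(\gamma-1)}<0$, because $\kappa<1$ under either branch of the hypothesis, so $\rho\to0$; as $x\to-1$ the exponent is $-\frac{1-\kappa\gamma}{\kappa(\gamma-1)}$, whose sign is that of $\kappa\gamma-1$, giving $\rho=\mathcal{O}(1)$ for $\kappa=\frac1\gamma$, $\rho\to\infty$ for $\kappa>\frac1\gamma$ and $\rho\to0$ for $\kappa<\frac1\gamma$, and since $\frac1\gamma<\frac1{\gamma-1}$ (resp.\ $\frac1\gamma<1$ when $\gamma\le2$) all three sub-cases do occur inside the admissible range of $\kappa$; near $0$, $\rho$ stays trapped between two positive constants. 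I expect the main obstacle to be exactly step (b): making sure that the asymptotics of the \emph{average} of $\rho$ over shrinking cylinders is inherited by $\rho$ pointwise, in particular in the case $\kappa>\frac1\gamma$ where $\rho$ blows up.
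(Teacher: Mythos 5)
Your steps (a) and (c) are exactly the paper's argument: the authors compute $\mu$(cylinder)$/m$(cylinder) via (\ref{reldens}) and the scaling of $a_{\pm p}$, $b_{\pm p}$, obtain the exponents $-\frac{1-\kappa}{\kappa(\gamma-1)}$ near $1$, $-\frac{1-\kappa\gamma}{\kappa(\gamma-1)}$ near $-1$, and $\rho=C_r\hat\rho\approx 1$ on $I_0$, and then read off the signs; they identify these cylinder ratios with the behaviour of $\rho$ without further comment. Your step (b) is an extra refinement the paper leaves implicit, and your primary version of it is sound in spirit: on a cylinder near $\pm1$ the density is a sum of branch contributions $C_r(\hat\rho\circ\psi)|D\psi|$, each with distortion bounded uniformly in the cylinder (this needs the intermediate-iterate distortion bounds, i.e.\ the analogue of (\ref{B5}), not just the full-return statement), and the inequality chain $\sup\sum_k f_k\le\sum_k\sup f_k\le C\sum_k\inf f_k\le C\inf\sum_k f_k$ transfers the average to pointwise bounds. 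One correction there: near $-1$ the contributing cylinders are all $Z_p$ with $p\ge n+2$, so the sum is countably infinite, not finite; this does no harm since the inequality above works for convergent sums of non-negative terms with a uniform distortion constant, but the word ``finite'' should go.

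Your proposed alternative via bounded variation, however, does not work. Kowalski's theorem, as used in Section~5 and in the paper's discussion of densities, applies to the \emph{induced} Rychlik map and gives bounded variation of $\hat\rho$ on the inducing interval, not of $\rho$ on $[-1,1]$. Indeed $\rho$ cannot be of bounded variation in the regime $\kappa>\frac1\gamma$, since a BV function on a compact interval is bounded while $\rho\to\infty$ at $-1$; so the claim that $\osc_C\rho$ is summable over the cylinders fails precisely in the divergent case where you invoke it. Keep the distortion argument and drop the BV alternative.
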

Note that our Proposition fits with the density found by Hemmer for the map (\ref{HH}); for this map and its circle companion (1) the correlations decay as $n^{-1}$.

\vspace{2cm}

{\em Acknowledgment}
We warmly thank R. Artuso  who introduced us to the maps studied in this paper and with whom we had  interesting discussions. S.V. and G.C. acknowledge a financial support of the GDRE (CNRS) "Grefi-Mefi". S.V. also thanks M. Gianfelice for useful discussions and for having showed him the reference \cite{PM}.

\vspace{0.3cm}
$$
-----------------------------------------------
$$
\begin{itemize}
\item Giampaolo \textsc{Cristadoro}: Mathematics Department, Universit\`a di Bologna, Bologna, Italy.\\
\url{cristadoro@dm.unibo.it} 
\item Nicolai \textsc{Haydn}: Mathematics Department, USC, Los Angeles, 90089-1113, USA.\\
 \url{nhaydn@math.usc.edu}
\item Philippe \textsc{Marie}: UMR-6207 Centre de Physique Th\'eorique, CNRS, Universit\'es d'Aix-Marseille I, II, Universit\'e du Sud, Toulon-Var, France.\\
\url{pmarie@cpt.univ-mrs.fr}
\item  Sandro \textsc{Vaienti}: UMR-6207 Centre de Physique Th\'eorique, CNRS,
Universit\'es d'Aix-Marseille I, II, Universit\'e du Sud, Toulon-Var and FRUMAM, F\'ed\'ederation de
Recherche des Unit\'es de Math\'ematiques de Marseille; address: CPT, Luminy Case 907, F-13288
Marseille Cedex 9, France.\\
 \url{ vaienti@cpt.univ-mrs.fr}
\end{itemize}

\end{document}